\theoremstyle{plain}
\newtheorem{tw}{Theorem}[section]
\newtheorem {lem} [tw]{Lemma}
\newtheorem {prop}[tw] {Proposition}
\newtheorem{cor}[tw]{Corollary}
\newtheorem{quest}{Question}[section]
\theoremstyle{definition}
\newtheorem {deft}[tw] {Definition}
\newtheorem {rem} [tw]{Remark}
\newcommand{\bc} {\Bbb C}
\newcommand{\bn}{\Bbb N}
\newcommand{\br}{\Bbb R}
\newcommand{\QGam}{\Gamma}
\newcommand {\Pol} {{\textup{Pol}}}
\newcommand {\Alg} {\mathcal{A}}
\newcommand {\id} {{\textrm{id}}}
\newcommand{\tu}{\textup}
\newcommand{\Hil}{\mathsf{H}}
\newcommand{\Gil}{\mathsf{G}}
\newcommand{\Ril}{\mathsf{R}}
\newcommand{\Irr}{\textup{Irr}}
\newcommand{\ol}{\overline}
\newcommand{\Ker}{\tu{Ker}}
\newcommand{\QG}{\mathbb{G}}
\newcommand{\mean}{\mathcal{M}}
\newcommand{\Com}{\Delta}
\newcommand{\Cou}{\epsilon}
\newenvironment{rlist}
{

\begin{enumerate}}
{\end{enumerate}}
\newcommand{\la}{\langle}
\newcommand{\ra}{\rangle}
\newcommand{\PolQG}{\Pol(\QG)}
\newcommand{\ot}{\otimes}
\newcommand{\wt}{\widetilde}
\numberwithin{equation}{section}
\begin{document}

%\usetikzlibrary{arrows,positioning}

\title[Generating functionals, cocycles and symmetry]{One-to-one correspondence between generating functionals and cocycles on quantum groups in presence of symmetry}

\begin{abstract}
We prove that under a symmetry assumption all cocycles on Hopf $^*$-algebras arise from generating functionals. This extends earlier results of R.Vergnioux  and D. Kyed and has two quantum group applications: all quantum L\'evy processes with symmetric generating functionals decompose into a maximal Gaussian and purely non-Gaussian part and the Haagerup property for discrete quantum groups is characterized by the existence of an arbitrary proper cocycle.
\end{abstract}

\keywords{Hopf $^*$-algebra; cocycle; generating functional; quantum L\'evy process; quantum group; Haagerup property}
\subjclass[2010]{Primary 16T20, Secondary 16T05, 46L65}

\author{Biswarup Das}
\address{Institute of Mathematics of the Polish Academy of Sciences,
ul.~\'Sniadeckich 8, 00--656 Warszawa, Poland}
\email{B.Das@impan.pl}
\thanks{BD acknowledges the support of  WCMS postdoctoral grant}

\author{Uwe Franz}
\address{D\'epartement de math\'ematiques de Besan\c{c}on,
Universit\'e de Franche-Comt\'e 16, route de Gray, 25 030
Besan\c{c}on cedex, France}
\email{uwe.franz@univ-fcomte.fr}
\urladdr{http://www-math.univ-fcomte.fr/pp\underline{ }Annu/UFRANZ/}
\thanks{UF was supported by an ANR Project OSQPI (ANR-11-BS01-0008).}

\author{Anna Kula}
\address{Instytut Matematyczny, Uniwersytet Wroc\l awski, pl.Grunwaldzki 2/4, 50-384 Wroc\l aw, Poland
\newline \indent
Instytut Matematyki, Uniwersytet Jagiello\'nski, ul.\L ojasiewicza 6, 30--348 Krak\'ow, Poland}
\email{Anna.Kula@math.uni.wroc.pl}
\thanks{AK was partially supported by the PostDoctoral Fellowship  2012/04/S/ST1/00102 of the Polish National Science Center.}

\author{Adam Skalski}
\address{Institute of Mathematics of the Polish Academy of Sciences,
ul.~\'Sniadeckich 8, 00--656 Warszawa, Poland \newline \indent Faculty of Mathematics, Informatics and Mechanics, University of Warsaw, ul.~Banacha 2,
02-097 Warsaw, Poland
\newline \indent CNRS, D\'epartement de math\'ematiques de Besan\c{c}on,
Universit\'e de Franche-Comt\'e 16, route de Gray, 25 030
Besan\c{c}on cedex, France}
\thanks{AS is partially supported by the HARMONIA NCN grant
2012/06/M/ST1/00169}
\email{a.skalski@impan.pl}

%\title{\bf Quantum L\'evy processes on multiplier $C^*$-bialgebras (aka quantum stochastic convolution cocycles III) - revised Feb 2008}

\maketitle

The correspondence between (real) conditionally positive definite functions on a discrete group $\Gamma$ and cocycles for orthogonal representations of $\Gamma$ on real Hilbert spaces plays a key role in the functional-analytic geometric group theory, allowing a transition between analytic and geometric data (see for example the dictionary in the beginning of Chapter 2 in \cite{bdv}). Perhaps less well-known, but equally important, is its role in the algebraic approach to L\'evy processes on groups and L\'evy-Khintchine decomposition. When we pass to the quantum context, replacing classical groups by their quantum counterparts, the relation between the two corresponding notions, i.e.\ \emph{generating functionals} and \emph{cocycles} on $\PolQG$, where $\QG$ is a compact quantum group,  becomes somewhat more involved, but its importance does not diminish. This was first recognized by M.Sch\"urmann (\cite{SchuProc}, \cite{Schurmann}), who in particular used the GNS construction leading from a generating functional to a cocycle as a key step in the reconstruction theorem for quantum L\'evy processes. He also noted the connection between the possibility of attributing a generating functional to a given cocycle and the extraction of a maximal Gaussian part of a given quantum L\'evy process. It was soon realized that in general given a cocycle the corresponding generating functional need not exist (\cite{SchSk}, \cite{Skeide}).  Almost twenty years later D.Kyed, basing his result on the unpublished notes by R.Vergnioux, showed that \emph{real} cocycles always arise from generating functionals. Reality is now understood as a specific interaction with the antipode $S$ of $\PolQG$. In Kyed's paper this was used to study Kazhdan's Property (T) for discrete quantum groups; recently the same construction was employed to the analysis of the Haagerup property in \cite{DawFimSkaWhi}.

In this work we extend the results of Vergnioux and Kyed to the situation where the reality condition is in addition twisted by what we call an \emph{admissible bijection}. A primary motivating example arises from a scaling action on the algebra of functions on a non-Kac compact quantum group, where the new reality condition can be viewed as related to the \emph{unitary antipode} of $\PolQG$. We show that in fact for any Hopf $^*$-algebra $\Alg$ and an admissible bijection $\alpha:\Alg \to \Alg$ there is a one-to-one correspondence between \emph{$\alpha$-real} cocycles on $\Alg$ and $S \circ \alpha$-invariant generating functionals on $\Alg$. The difficult part is the one where we associate a functional to a given cocycle, the other direction follows from Sch\"urmann's GNS construction. It deserves to be noted that the construction, although purely algebraic, is more involved than the one of Vergnioux-Kyed. The result turns out to have some important applications: it enables us to show that any quantum L\'evy process whose generating functional is $S\circ \alpha$-invariant allows the extraction of its maximal Gaussian part and that the Haagerup property for a discrete quantum group is characterised by the existence of a proper (not necessarily real) cocycle. The latter result strengthens Theorem 7.23 in \cite{DawFimSkaWhi}.

The plan of the article is as follows: in Section 1 we describe the notation and terminology, in particular developing the concept of the admissible bijection, quoting known results regarding the correspondence between cocycles and generating functionals and presenting a homological viewpoint on the problem studied in the paper. In Section 2 we prove the main result and in Section 3 establish two applications mentioned above.

  All the inner products in the article are linear on the right.

\section{Notations and preliminaries} \label{NotPrel}

Let $\Alg$ be a Hopf $^*$-algebra (\cite{KlimykSchmudgen}). We will denote its coproduct by $\Com$, the counit by $\Cou$ and the antipode by $S$, and very often employ \emph{Sweedler's notation}: if $a \in \Alg$ then
\[ \Com(a) = a_{(1)} \ot  a_{(2)}.\]
This requires certain care: so for example, as the interaction of the coproduct with the antipode involves a tensor flip $\Sigma:\Alg \ot \Alg \to \Alg \ot \Alg$, we have $(Sa)_{(1)} = S( a_{(2)})$, $(Sa)_{(2)} = S( a_{(1)})$. The defining antipode relation in Sweedler's notation takes the following form:
\begin{equation} S(a_{(1)}) a_{(2)} = \Cou(a) 1 = a_{(1)} S(a_{(2)}), \;\; a \in \Alg.
\label{antipode}\end{equation}
Occasionally we will also need a triple version of the Sweedler notation: $\Com^{(2)}(a) = a_{(1)} \ot  a_{(2)} \ot  a_{(3)}$, where $a \in \Alg$ and $\Com^{(2)}:= (\id \ot \Com)\circ \Com = ( \Com \ot \id)\circ \Com$. The adjoint map of $\Alg$ will be denoted simply by $^*$, so that we have for example a relation $S \circ ^* S\circ ^* = \id$. We will further denote the ideal $\Ker (\Cou)$  by $K_1$.

The main motivating examples  we  have in mind are those of Hopf $^*$-algebras coming from compact quantum groups. Let then $\QG$ be a compact quantum group in the sense of Woronowicz (\cite{wor1}, \cite{wor2}) -- note it is defined implicitly, in terms of its \emph{algebra of continuous functions}, a unital $C^*$-algebra $\textup{C}(\QG)$, equipped with a coproduct $\Com: \textup{C}(\QG) \to \textup{C}(\QG) \ot^{\textup{sp}} \textup{C}(\QG)$, where $\ot^{\tu{sp}}$ denotes the spatial tensor product of $C^*$-algebras. A unitary matrix $U=(u_{ij})_{i,j=1}^n \in M_n (\textup{C}(\QG))$ is called a (finite-dimensional) \emph{unitary representation} of $\QG$ if $\Com(u_{ij})= \sum_{k=1}^n u_{ik} \ot u_{kj}$, $i,j=1,\ldots,n$; each of the elements $u_{ij}$ is called a \emph{coefficient} of $U$. The linear span of all coefficients of finite dimensional unitary representations of $\QG$ is a dense unital $^*$-subalgebra of $\textup{C}(\QG)$, which turns out to have the structure of a Hopf $^*$-algebra with the coproduct inherited from $\textup{C}(\QG)$. There are natural notions of irreducibility and unitary equivalence for unitary representations of $\QG$; if we denote by $\Irr (\QG)$ the set of all equivalence classes of irreducible representations of $\QG$ and for each $\beta \in \Irr (\QG)$ choose a representative $U^{\beta} \in M_{n_{\beta}}(\Pol(\QG))$ then $\{u^{\beta}_{ij}:\beta \in \Irr (\QG), i,j=1, \ldots, n_{\beta}\}$ forms a linear basis of $\Pol(\QG)$. The Hopf $^*$-algebra $\Pol(\QG)$ admits a \emph{scaling automorphism group}, i.e.\ a one-parameter group of automorphisms $(\tau_t)_{t \in \br}$ which is `locally implemented', i.e.\ for each $\beta \in \Irr(\QG)$ there exists a positive invertible matrix $Q^{\beta} \in M_{n_{\beta}}$ such that we have
\[(\tau_t \ot \id_{M_{n_{\beta}}} )(U^{\beta}) = (Q^{\beta})^{it} U^{\beta} (Q^{\beta})^{-it}.\]
The above formula (or the implementation in terms of so-called \emph{Woronowicz characters}) implies that we can in fact replace $t$ above by any  number $z\in \bc$ and still obtain a bijective homomorphism $\tau_z:\Pol(\QG)\to \Pol(\QG)$, such that $\tau_z(a^*) = (\tau_{\bar{z}}(a))^*$ for all $a \in \Pol(\QG)$. Each $\tau_z$ commutes with the antipode and also intertwines the coproduct: $(\tau_z \ot \tau_z) \circ \Com = \Com \circ \tau_z$; moreover $\tau_z \circ \tau_w = \tau_{z+w}$ for all $z, w \in \bc$.

We can assume, at the cost of possibly choosing another representative of $\beta$, that the matrix $Q^{\beta}$ is diagonal: this means that there exist strictly positive numbers $q_1(\beta), \ldots, q_{n_{\beta}}(\beta)$ such that for each $z\in \bc$ and $i,j=1,\ldots, n_{\beta}$ we have
\[ \tau_z(u_{ij}^{\beta}) = \left( \frac{q_i(\beta)}{q_j(\beta)} \right)^{iz} u_{ij}^{\beta}.\]
The \emph{unitary antipode} $R$ of $\QG$ is a $^*$-preserving involutive anti-automorphism of $\Pol(\QG)$ defined as $R=S\circ \tau_{\frac{i}{2}}$; in the basis chosen above one obtains thus
\[ R(u_{ij}^{\beta}) = \left( \frac{q_j(\beta)}{q_i(\beta)} \right)^{\frac{1}{2}} (u_{ji}^{\beta})^*.\]
All these facts can be located in \cite{wor2}, see also \cite{KlimykSchmudgen} (note that we follow rather a convention used for example in \cite{Johan} or \cite{DawFimSkaWhi} then this of \cite{wor2} -- in the latter article one has $R=S\circ \tau_{-\frac{i}{2}}$). Finally observe that the scaling automorphisms are non-trivial if and only if $\QG$ is not of the \emph{Kac type}, that is if the Haar state of $\QG$ is not tracial. In particular they trivialise when we study classical compact groups $G$.

Finally let us recall that the Hopf $^*$-algebras arising as $\Pol(\QG)$ for a certain compact quantum group $\QG$ have intrinsic characterization, as \emph{CQG algebras} (\cite{DiK}).

\subsection*{Admissible bijections}
The following definition plays a crucial role in this article.

\begin{deft}\label{Def:admissible}
Let $\Alg$ be a Hopf$^*$-algebra. A map $\alpha:\Alg \to \Alg$ is called an admissible bijection if it satisfies the following conditions:
\begin{rlist}
\item $\alpha$ is a homomorphism;
\item $\alpha \circ ^* \circ \alpha \circ ^* = \id$;
\item $(\alpha \ot \alpha)\circ \Com = \Com \circ \alpha$;
\item the linear map $(\id + \alpha): \Alg \to \Alg$ is a bijection;
\item the linear map $(\id \ot \id + \alpha \ot \alpha): \Alg \ot \Alg\to \Alg \ot \Alg$ is a bijection.
\end{rlist}
\end{deft}

It is not difficult to see that if an admissible bijection is $^*$-preserving (in other words it is an automorphism of the Hopf $^*$-algebra $\Alg$), it must be an identity map -- indeed, if $\alpha$ is $^*$-preserving then (ii) above implies that it is an order two automorphism, so $\Alg$ decomposes into  eigenspaces of $\alpha$ corresponding respectively to eigenvalues 1 and -1. If the second of these is non-trivial, then condition (iv) cannot hold.

Given an admissible bijection $\alpha:\Alg \to \Alg$ we define an \emph{$\alpha$-twisted antipode} $S_{\alpha}$ via the formula
\[S_{\alpha} =  S \circ \alpha.\]

\begin{prop} \label{tauadmissible}
Let $\QG$ be a compact quantum group, $t\in \br$. Then the map $\tau_{it}: \Pol(\QG) \to \Pol(\QG)$ is an admissible bijection. In particular the unitary antipode $R=S \circ \tau_{\frac{i}{2}}$ is an $\alpha$-twisted antipode for $\alpha = \tau_{\frac{i}{2}}$.
\end{prop}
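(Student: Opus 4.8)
The plan is to verify the five conditions of Definition~\ref{Def:admissible} one by one, with $\alpha = \tau_{it}$ for $t \in \br$ fixed. Conditions (i)--(iii) are essentially restatements of properties of the scaling group already recorded above, so the genuine content lies in the bijectivity conditions (iv) and (v), for which I would exploit the explicit diagonal action of $\tau_{it}$ on the canonical basis of coefficients.

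First I would dispose of (i)--(iii). That each $\tau_z$, and in particular $\tau_{it}$, is a bijective homomorphism is stated in the preliminaries, giving (i). For (ii) I would compute $(\alpha \circ {}^* \circ \alpha \circ {}^*)(a)$ using the relation $\tau_z(a^*) = (\tau_{\bar z}(a))^*$ with $z = it$, $\bar z = -it$: applying the four maps in turn sends $a \mapsto a^* \mapsto \tau_{it}(a^*) = (\tau_{-it}(a))^* \mapsto \tau_{-it}(a) \mapsto \tau_{it}(\tau_{-it}(a)) = \tau_0(a) = a$, so (ii) holds. Condition (iii) is precisely the intertwining relation $(\tau_z \ot \tau_z)\circ \Com = \Com \circ \tau_z$ specialised to $z = it$.

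The core of the argument is (iv) and (v). Here I would use the diagonalised form $\tau_z(u_{ij}^\beta) = (q_i(\beta)/q_j(\beta))^{iz}\, u_{ij}^\beta$. Setting $z = it$ gives $\tau_{it}(u_{ij}^\beta) = \lambda_{ij}^\beta\, u_{ij}^\beta$ with $\lambda_{ij}^\beta = (q_i(\beta)/q_j(\beta))^{-t}$, which is a \emph{strictly positive} real number since each $q_i(\beta) > 0$. Thus, in the basis $\{u_{ij}^\beta\}$ of $\Pol(\QG)$, the operator $\id + \tau_{it}$ is diagonal with eigenvalues $1 + \lambda_{ij}^\beta > 1$, all nonzero; hence it is invertible, with inverse acting by multiplication by $(1+\lambda_{ij}^\beta)^{-1}$ on each basis vector, and (iv) follows. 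For (v) I would pass to the tensor-product basis $\{u_{ij}^\beta \ot u_{kl}^\gamma\}$ of $\Pol(\QG)\ot\Pol(\QG)$, on which $\tau_{it}\ot\tau_{it}$ acts diagonally with eigenvalues $\lambda_{ij}^\beta\,\lambda_{kl}^\gamma > 0$; consequently $\id\ot\id + \tau_{it}\ot\tau_{it}$ has eigenvalues $1 + \lambda_{ij}^\beta\,\lambda_{kl}^\gamma > 1$, again all nonzero, so it is a bijection.

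The main obstacle, such as it is, is exactly conditions (iv) and (v): positivity of the eigenvalues is what rules out any cancellation in $\id + \tau_{it}$ and in $\id\ot\id + \tau_{it}\ot\tau_{it}$, and it is the genuinely non-formal input, which would fail for an arbitrary automorphism commuting with the coproduct. The final assertion is then immediate: taking $t = \tfrac12$ shows that $\tau_{i/2}$ is an admissible bijection, and by definition the associated $\alpha$-twisted antipode is $S_{\tau_{i/2}} = S \circ \tau_{i/2} = R$, identifying the unitary antipode as the $\alpha$-twisted antipode for $\alpha = \tau_{i/2}$.
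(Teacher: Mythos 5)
Your proposal is correct and follows essentially the same route as the paper: conditions (i)--(iii) are read off from the stated properties of the scaling group, (ii) is checked via $\tau_{it}(a^*)=(\tau_{-it}(a))^*$, and (iv)--(v) are verified by diagonalising $\id+\tau_{it}$ (resp.\ $\id\ot\id+\tau_{it}\ot\tau_{it}$) on the coefficient basis with eigenvalues $1+(q_i(\beta)/q_j(\beta))^{-t}$, which are nonzero precisely because the ratios of the $q$'s are positive reals. Your explicit remark that positivity is the reason the eigenvalues cannot vanish is a welcome (if minor) clarification of what the paper leaves implicit.
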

\begin{proof}
Let $t \in \br$. The discussion in the beginning of this section shows that the first three conditions are satisfied; for the second remark just that we have
\[\tau_{it} \circ ^* \circ \tau_{it} \circ ^* = ^* \circ \tau_{\overline{it}}  \circ \tau_{it} \circ ^* = \id.\]
Further note that in the basis $\{u^{\beta}_{ij}:\beta \in \Irr (\QG), i,j=1, \ldots, n_{\beta}\}$ of $\Pol(\QG)$ the linear map $\id + \tau_{it}$ is a diagonal operator with respective eigenvalues $1+ \left(\frac{q_i(\beta)}{q_j(\beta)}\right)^{-t}  \neq 0$. Similarly in the basis $\{u^{\beta}_{ij} \ot u^{\beta'}_{kl} :\beta, \beta' \in \Irr (\QG), i,j=1, \ldots, n_{\beta}, k,l=1, \ldots, n_{\beta'}\}$ of $\Pol(\QG) \ot \Pol(\QG)$ the map $\id \ot \id + \tau_{it} \ot \tau_{it}$ is diagonal with non-zero eigenvalues $1+ \left(\frac{q_i(\beta)q_k(\beta')}{q_j(\beta)q_l(\beta')}\right)^{-t} \neq 0$. This shows that (iv)-(v) of Definition \eqref{Def:admissible} also hold.
\end{proof}

\begin{rem}
The comment after Definition \ref{Def:admissible} shows that to find non-trivial admissible bijections one needs to deform the $^*$-structure of $\Alg$. The construction in Proposition \ref{tauadmissible}, of course providing new examples only in the non-Kac case, suggests the following example. Let $N \in \bn$, $N\geq 2$ and consider the unitary group $U(N)$ with $(U_{ij})_{i,j=1}^N$ denoting the standard generators of $\Pol(U(N))$. Further consider a tuple $(q_1,\ldots,q_N)$ of strictly positive real numbers. An identification of $\Alg:=\Pol(U(N))$ as the universal commutative algebra generated by $2N^2$ variables $\{U_{ij}, U_{kl}^*:i,j,k,l=1, \ldots N\}$ satisfying the unitarity relations $\sum_{k=1}^n U_{ik} U_{jk}^* = \sum_{k=1}^n U_{ki} U_{kj}^*=\delta_{ij} I$ for each $i,j=1,\ldots,n$ allows us to define a bijective homomorphism $\alpha$ on $\Alg$ via the homomorphic extension of the formulas
\[ \alpha(U_{ij}) = q_i q_j^{-1} U_{ij}, \;\;\; \alpha(U_{kl}^*) = q_l q_k^{-1} U_{kl}^*, \;\;\; i,j,k,l=1, \ldots N.\]
An explicit, long calculation shows that $\alpha$ is indeed an admissible bijection, different from the identity map if  not all of the numbers $q_i$ coincide.

\end{rem}

We will now collect some basic properties of admissible bijections and related twisted antipodes.

\begin{prop} \label{propadmissbij}
Let $\alpha:\Alg \to \Alg$ be an admissible bijection. Then
\begin{rlist}
\item $\alpha$ is a bijection, $\alpha^{-1} = ^* \circ \alpha \circ ^*$ is an admissible bijection;
\item $\alpha (1) =1$ (so also $S_{\alpha}(1) = 1$);
\item $\Cou \circ \alpha =\Cou$ (so also $\Cou \circ S_{\alpha} = \Cou$);
\item $\alpha \circ S = S \circ \alpha$ (so that $S_{\alpha} = \alpha \circ S $);
\item $S_\alpha \circ ^* \circ S_\alpha \circ ^* = \id$ and $S_{\alpha}$ is an anti-homomorphism;
\item $(S_\alpha \ot S_\alpha) \circ \Sigma \circ \Com  = \Com \circ S_{\alpha}$;
\item the twisted antipode relation holds:
\begin{equation} S_{\alpha}(a_{(1)}) \alpha(a_{(2)}) = \Cou(a) 1= \alpha(a_{(1)}) S_{\alpha}(a_{(2)}), \;\; a \in \Alg.\label{twistedantipoderelation}\end{equation}
\end{rlist}
\end{prop}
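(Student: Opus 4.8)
The plan is to prove the seven assertions in an order that lets the formal ones rest on the substantive ones. Assertions (i)--(ii) are elementary consequences of multiplicativity together with the involutivity condition (ii) of Definition \ref{Def:admissible}; assertions (iii) and (iv) carry the genuine content; and (v)--(vii) then follow by direct substitution combined with the standard Hopf $^*$-algebra identities recalled in Section \ref{NotPrel}. Throughout I write ${}^{*}$ for the adjoint and freely use that $\Com$ is a $^*$-homomorphism, so $\Com\circ{}^{*}=({}^{*}\ot{}^{*})\circ\Com$, and the standard identities $S(a_{(1)})a_{(2)}=\Cou(a)1=a_{(1)}S(a_{(2)})$, $\Com\circ S=(S\ot S)\circ\Sigma\circ\Com$, $\Cou\circ S=\Cou$, $S(1)=1$ and $S\circ{}^{*}\circ S\circ{}^{*}=\id$.

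For (i) I set $\beta={}^{*}\circ\alpha\circ{}^{*}$. Rewriting condition (ii) of Definition \ref{Def:admissible} gives $\alpha\circ\beta=\id$, and pre- and post-composing that condition with ${}^{*}$ gives $\beta\circ\alpha=\id$; hence $\alpha$ is a bijection with $\alpha^{-1}=\beta$. That $\beta$ is again an admissible bijection is checked condition by condition: it is a homomorphism (anti $\circ$ homo $\circ$ anti), it satisfies (ii) since $\beta\circ{}^{*}={}^{*}\circ\alpha$, it intertwines $\Com$ because $\alpha$ does and $\Com$ commutes with ${}^{*}\ot{}^{*}$, and (iv)--(v) of Definition \ref{Def:admissible} hold because $\id+\alpha^{-1}=\alpha^{-1}\circ(\id+\alpha)$ (and likewise on $\Alg\ot\Alg$) is a composite of bijections. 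For (ii) I use bijectivity: from $\alpha(1)\alpha(a)=\alpha(a)$ and surjectivity, $\alpha(1)$ is a left unit, so $\alpha(1)=1$; then $S_\alpha(1)=S(\alpha(1))=S(1)=1$.

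The heart of the argument is (iii) and (iv). For (iii), applying $\id\ot\Cou$ to the intertwining relation $\Com\circ\alpha=(\alpha\ot\alpha)\circ\Com$ and using the counit axiom yields $\alpha(a)=\alpha(a_{(1)})\,(\Cou\circ\alpha)(a_{(2)})$; applying $\alpha^{-1}$ gives $a=a_{(1)}\,(\Cou\circ\alpha)(a_{(2)})$, and applying $\Cou$ to this, together with $(\Cou\ot\id)\circ\Com=\id$, collapses it to $\Cou=\Cou\circ\alpha$, whence also $\Cou\circ S_\alpha=\Cou\circ S\circ\alpha=\Cou$. Assertion (iv) is the main obstacle, since $\alpha$ need not be $^*$-preserving and one cannot simply transport the antipode axiom. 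The plan is to work in the convolution algebra $\bigl(\textup{End}(\Alg),*\bigr)$, where $(f*g)=m\circ(f\ot g)\circ\Com$, the unit is $1\,\Cou$, and $S$ is by definition the convolution inverse of $\id$. I show that $\Phi(f):=\alpha\circ f\circ\alpha^{-1}$ is a \emph{unital} homomorphism for $*$: multiplicativity of $\alpha$ gives $\Phi(f)*\Phi(g)=\alpha\circ\bigl((f*g)\circ\alpha^{-1}\bigr)=\Phi(f*g)$ once one uses that $\alpha^{-1}$ intertwines $\Com$ (assertion (i)), and $\Phi(1\,\Cou)=1\,\Cou$ uses $\Cou\circ\alpha^{-1}=\Cou$ and $\alpha(1)=1$. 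Since $\Phi(\id)=\id$, the element $\Phi(S)$ is a convolution inverse of $\id$, so uniqueness of inverses forces $\Phi(S)=S$, i.e. $\alpha\circ S\circ\alpha^{-1}=S$, which is exactly $\alpha\circ S=S\circ\alpha$; consequently $S_\alpha=\alpha\circ S$.

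The remaining parts are then formal. For (v), $S_\alpha=S\circ\alpha$ is anti $\circ$ homo, hence an anti-homomorphism; and using $\alpha\circ{}^{*}={}^{*}\circ\alpha^{-1}$ (from assertion (i)), $\alpha\circ S=S\circ\alpha$ (assertion (iv)) and $S\circ{}^{*}\circ S\circ{}^{*}=\id$, one computes $S_\alpha\circ{}^{*}\circ S_\alpha\circ{}^{*}=S\circ{}^{*}\circ\alpha^{-1}\circ S\circ\alpha\circ{}^{*}=S\circ{}^{*}\circ S\circ\alpha^{-1}\circ\alpha\circ{}^{*}=\id$. For (vi), substitute $\Com\circ\alpha=(\alpha\ot\alpha)\circ\Com$ into $\Com\circ S=(S\ot S)\circ\Sigma\circ\Com$ applied to $\alpha(a)$ and commute $\Sigma$ past $\alpha\ot\alpha$, giving $\Com\circ S_\alpha=(S_\alpha\ot S_\alpha)\circ\Sigma\circ\Com$. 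Finally (vii) follows by applying the antipode relation \eqref{antipode} to $\alpha(a)$, replacing $\alpha(a)_{(1)}\ot\alpha(a)_{(2)}$ by $\alpha(a_{(1)})\ot\alpha(a_{(2)})$ via the intertwining of $\Com$, recognising $S(\alpha(a_{(i)}))=S_\alpha(a_{(i)})$, and using $\Cou(\alpha(a))=\Cou(a)$ from (iii) to obtain \eqref{twistedantipoderelation}.
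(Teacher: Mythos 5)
Your proof is correct and follows essentially the route the paper indicates in its (very terse) proof: the direct verification of (i)--(iii), the uniqueness of the antipode (which you formalize via uniqueness of convolution inverses, noting that $f\mapsto\alpha\circ f\circ\alpha^{-1}$ is a unital homomorphism of the convolution algebra) for (iv), and the antipode relation combined with the intertwining of $\Com$ for (vii). The remaining items follow by the same formal substitutions in both treatments.
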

\begin{proof}
All the above follow easily from the definitions. For example to show (iii) we use Definition \ref{Def:admissible} (iii), the fact that $\alpha$ is a  bijection and the uniqueness of a counit on a Hopf algebra; to show (iv) we use again Definition \ref{Def:admissible} (iii),  the fact that $\alpha$ is a homomorphic bijection, property (ii) and the uniqueness of an antipode on a Hopf algebra. To obtain (vii) we apply the homomorphism $\alpha$ to the defining antipode relation and use the fact that as $\alpha$ `commutes' with the coproduct we have in Sweedler's notation  $\alpha(a_{(1)}) = \alpha(a)_{(1)}$ and  $\alpha(a_{(2)}) = \alpha(a)_{(2)}$.
\end{proof}

\subsection*{Cocycles and generating functionals}

In this subsection we discuss quickly the generating functionals and cocycles for representations of Hopf $^*$-algebras and basic relations between these notions.

\begin{deft}
Let $\Alg$ be a Hopf $^*$-algebra. A generating functional on $\Alg$ is a functional $L:\Alg \to \bc$ which is conditionally positive, hermitian and vanishes at $1$:
\[ L(a^*a) \geq 0, \;\; a \in K_1,\]
\[ L(b^*) = \overline{L(b)}, \;\; b \in \Alg,\]
\[ L(1)=0.\]
\end{deft}

The importance of generating functionals lies in the fact that they generate convolution semigroups of states (see \cite{Schurmann} and \cite{LS} for the topological, analytic context), and thus further classify quantum L\'evy processes up to stochastic equivalence (see \cite{Schurmann}).

Let $D$ be a pre-Hilbert space. By a representation of $\Alg$ on $D$ we will always mean a unital $^*$-homomorphism from $\Alg$ to $\mathcal{L^{\dagger}}(D)$, the $^*$-algebra of operators on $D$ which admit (pre-)adjoints defined on $D$ and leaving $D$ invariant. Note that if $\Alg$ is a CQG algebra each such representation is bounded operator valued, and hence extends automatically to a unital $^*$-homomorphism from $\Alg$ to $B(\Hil)$, where $\Hil$ is the Hilbert space completion of $D$.

\begin{deft}
Let $\Alg$ be a Hopf $^*$-algebra, and let $D$ be a pre-Hilbert space. A linear map $\eta:\Alg\to D$ is said to be a (nondegenerate) cocycle (for a representation $\pi$ of $\Alg$ on $D$) if it satisfies the equation:
\begin{equation} \label{cocyclerelation} \eta(ab)= \pi(a) \eta(b) + \eta(a) \Cou(b),\;\;\; a, b \in \Alg\end{equation}
and the image $\eta(\Alg)$ is dense in $D$.
\end{deft}

Usually non-degeneracy is not a part of the defining condition for cocycles, but for us it is convenient to include it, as we will work only with the cocycles which have this property (in any case a degenerate cocycle $\eta$ can be always viewed as a non-degenerate one by restricting the representation $\pi$ to the invariant subspace $\eta(\Alg)$). Note also that as for each cocycle $\eta(1)=0$, we have $\eta(\Alg)= \eta(K_1)$.

\begin{deft} \label{deft:yieldscoboundary}
Let $L:\Alg \to \bc$ be a generating functional and let $\eta:\Alg \to D$ be a cocycle. We say that $\eta$ yields the coboundary of $L$ if
\begin{equation} \label{formula:yieldscoboundary} L(ab) = \Cou(a) L(b) + L(a) \Cou (b) + \la \eta(a^*), \eta(b) \ra, \;\;\; a, b \in \Alg\end{equation}
(equivalently, $L(ab) = \la \eta(a^*), \eta(b) \ra$ for $a, b \in K_1$).
\end{deft}

We say that two cocycles $\eta_1:\Alg \to D_1$, $\eta_2:\Alg \to D_2$ are \emph{unitarily equivalent} if there exists a unitary operator $U:\Hil_1 \to \Hil_2$, where $\Hil_1, \Hil_2$ denote the respective Hilbert space completions, such that for all $a \in \Alg$ we have $\eta_2(a) = U \eta_1(a)$.
It is easy to see that if two cocycles yield the coboundary of the same generating functional then they are unitarily equivalent. Moreover if $\eta: \Alg \to D$ is a cocycle and $L:\Alg \to \bc$ is a functional satisfying \eqref{formula:yieldscoboundary} then $L$ is a generating functional if and only if it is hermitian. The following result forms a part of the Sch\"urmann Reconstruction Theorem and can be shown via a GNS-type construction.

\begin{prop}[\cite{Schurmann}] \label{funct->cocycle}
Let $L:\Alg \to \bc$ be a generating functional. Then there exists a cocycle $\eta:\Alg \to D$ which yields the coboundary of $L$.
 \end{prop}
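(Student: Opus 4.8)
The plan is to perform a GNS-type construction directly from the generating functional $L$. First I would introduce on $\Alg$ the sesquilinear form
\[ \la a, b \ra_L := L(a^* b) - \Cou(a^*) L(b) - L(a^*) \Cou(b), \qquad a, b \in \Alg. \]
The key observation is that on the ideal $K_1 = \Ker(\Cou)$ this simplifies to $\la a, b \ra_L = L(a^* b)$, and conditional positivity of $L$ guarantees that this form is positive semi-definite on $K_1$ (note for $a \in K_1$ we have $a^* \in K_1$, so $L(a^* a) \geq 0$). Hermiticity of $L$ ensures the form is conjugate-symmetric. I would then set $N := \{a \in K_1 : \la a, a \ra_L = 0\}$, which by the Cauchy--Schwarz inequality is a subspace of $K_1$, and define $D := K_1 / N$ with the inner product induced by $\la \cdot, \cdot \ra_L$, so that $D$ is a genuine pre-Hilbert space.

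Next I would define the candidate cocycle map $\eta:\Alg \to D$. Since every $a \in \Alg$ decomposes uniquely as $a = (a - \Cou(a)1) + \Cou(a)1$ with $a - \Cou(a)1 \in K_1$, I would set
\[ \eta(a) := [\, a - \Cou(a) 1 \,] \in D, \]
where $[\,\cdot\,]$ denotes the image of an element of $K_1$ under the quotient map $K_1 \to D$. This is manifestly linear and satisfies $\eta(1) = 0$. To verify the coboundary property \eqref{formula:yieldscoboundary}, I would compute $\la \eta(a^*), \eta(b) \ra$: writing $a^* - \Cou(a^*)1$ and $b - \Cou(b)1$ as the representatives and expanding $L\big((a^* - \Cou(a^*)1)^*(b - \Cou(b)1)\big) = L\big((a - \Cou(a)1)(b - \Cou(b)1)\big)$, a direct algebraic expansion using $L(1)=0$ should recover exactly $L(ab) - \Cou(a) L(b) - L(a)\Cou(b)$, which is the content of Definition \ref{deft:yieldscoboundary}.

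It remains to produce the representation $\pi$ for which $\eta$ is a cocycle, and here I would define $\pi(a)$ acting on $D$ by left multiplication: $\pi(a)[b] := [\, ab - \Cou(a)b \,]$ for $a \in \Alg$, $b \in K_1$ (noting $ab - \Cou(a) b \in K_1$ since $\Cou$ is a homomorphism). The main work — and the step I expect to be the principal obstacle — is threefold: first, checking that $\pi$ is well-defined on the quotient, i.e.\ that $\pi(a)$ maps $N$ into $N$, which requires a Cauchy--Schwarz-type estimate showing $\la \pi(a)b, \pi(a)b\ra_L \leq C_a \la b, b\ra_L$ (this is the familiar GNS boundedness argument and is where the conditional positivity is used most delicately); second, verifying that $\pi$ is a unital $^*$-homomorphism into $\mathcal{L^{\dagger}}(D)$, where the adjointability $\la \pi(a)b, c\ra_L = \la b, \pi(a^*)c\ra_L$ follows from another direct expansion of the form; and third, checking the cocycle identity \eqref{cocyclerelation}, namely $\eta(ab) = \pi(a)\eta(b) + \eta(a)\Cou(b)$, which reduces to the algebraic identity $ab - \Cou(ab)1 = (ab - \Cou(a)b) + \Cou(b)(a - \Cou(a)1)$ in $K_1$, true because $\Cou(ab) = \Cou(a)\Cou(b)$. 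Finally, density of $\eta(\Alg)$ in $D$ is automatic since $\eta(K_1) = K_1/N = D$ already, so the cocycle is nondegenerate as required.
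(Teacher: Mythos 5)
Your overall strategy --- the GNS-type construction on $K_1/N$ --- is the right one and is essentially the standard argument from Sch\"urmann's book that the paper cites for this proposition; the form $\la\cdot,\cdot\ra_L$, the null space $N$, the map $\eta(a)=[a-\Cou(a)1]$ and the verification of \eqref{formula:yieldscoboundary} are all correct. The genuine error is in your definition of the representation. With $\pi(a)[b]:=[ab-\Cou(a)b]=[(a-\Cou(a)1)b]$ you get $\pi(1)[b]=[b-b]=0$, so $\pi$ is not unital, and it is not multiplicative either: $\pi(a)\pi(a')$ acts as left multiplication by $(a-\Cou(a)1)(a'-\Cou(a')1)$ while $\pi(aa')$ acts as left multiplication by $aa'-\Cou(a)\Cou(a')1$, and these differ by $\Cou(a')(a-\Cou(a)1)+\Cou(a)(a'-\Cou(a')1)$, which does not act as zero on $D$ in general. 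Correspondingly, the ``algebraic identity'' you invoke to verify the cocycle relation is false: the difference of its two sides is $\Cou(a)b-\Cou(b)a$. The correct definition is simply $\pi(a)[c]:=[ac]$ for $c\in K_1$ (legitimate because $K_1$ is a left ideal); then $\pi$ is visibly unital and multiplicative, adjointability reads $\la\pi(a)[c],[d]\ra_L=L(c^*a^*d)=\la[c],\pi(a^*)[d]\ra_L$, and the cocycle identity \eqref{cocyclerelation} reduces to the true identity $ab-\Cou(ab)1=a(b-\Cou(b)1)+\Cou(b)(a-\Cou(a)1)$.

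A secondary point: for well-definedness of $\pi(a)$ on the quotient you should not look for a bound $\la\pi(a)c,\pi(a)c\ra_L\le C_a\la c,c\ra_L$ --- in this purely algebraic setting representations of $\Alg$ take values in $\mathcal{L^{\dagger}}(D)$ and need not be bounded, so no such constant exists in general. Instead use Cauchy--Schwarz to identify $N$ with the left kernel $\{c\in K_1:\la d,c\ra_L=0 \text{ for all } d\in K_1\}$ and observe that for $c\in N$ and $d\in K_1$ one has $\la d,ac\ra_L=L(d^*ac)=\la a^*d,c\ra_L=0$, so $N$ is invariant under left multiplication by $\Alg$. With these two repairs the proof is complete and coincides with the cited construction.
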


Thus it is natural to ask the following question, which goes back to \cite{SchuProc}.

\begin{quest} \label{mainquest}
 Given a cocycle $\eta$ does it admit a generating functional $L$ for which it yields the coboundary?
 \end{quest}

In general it is known that the answer to this question is negative, even for cocycles with respect to trivial representations (i.e.\ for a multiple of the counit)  -- see Example 2.1 in \cite{Skeide}. The construction in \cite{Skeide} can be modified to yield counterexamples for a $CQG$-algebra. The answer to Question \ref{mainquest} is positive for all cocycles  on $\bc[\mathbb{F}_n]$, where $\mathbb{F}_n$ is the free group on $n$ generators, and on the so-called Brown-Glockner-von Waldenfels $^*$-bialgebra $\mathcal{U}\la n\ra$ (\cite{SchuProc}).

The terminology used above  (i.e.\ the cocycle, the coboundary) is related to the homological viewpoint on the relation \eqref{formula:yieldscoboundary}, which we discuss next.

\begin{lem} \label{Lemma-definitionphi}
If $\eta:\Alg \to D$ is a cocycle then there exists a unique linear map $\varphi:K_1\otimes_{\Alg} K_1\to \mathbb{C}$ such that
\begin{equation}\label{eq-def-phi}
\varphi(a\otimes b) = \langle\eta(a^*),\eta(b)\rangle, \;\;\; a, b \in K_1.
\end{equation}
\end{lem}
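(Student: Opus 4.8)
The plan is to realise $\varphi$ through the universal property of the balanced tensor product, so the task splits into (a) producing a $\mathbb{C}$-bilinear map on $K_1 \times K_1$ and (b) checking it factors through $K_1 \otimes_{\Alg} K_1$. Recall that $K_1 = \Ker(\Cou)$ is a two-sided ideal (as $\Cou$ is a homomorphism), hence an $\Alg$-bimodule, and the relevant structures in $K_1 \otimes_{\Alg} K_1$ are right multiplication on the first factor and left multiplication on the second, so that the balancing relation reads $x\cdot a \ot b = x \ot a \cdot b$ for $x, b \in K_1$ and $a \in \Alg$ (note $x\cdot a \in K_1$ and $a\cdot b \in K_1$ since $K_1$ is an ideal). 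First I would set $\wt{\varphi}(a,b) := \la \eta(a^*), \eta(b)\ra$ and observe that this is $\mathbb{C}$-bilinear: it is linear in $b$ since $\eta$ is linear and the inner product is linear on the right, while in $a$ the conjugate-linearity of $^*$ composed with $\eta$ is cancelled by the conjugate-linearity of the left slot of the inner product, yielding honest linearity. The universal property of $\otimes_{\mathbb{C}}$ then gives a unique linear $\varphi_0: K_1 \otimes_{\mathbb{C}} K_1 \to \mathbb{C}$ with $\varphi_0(a \ot b) = \la \eta(a^*), \eta(b)\ra$.

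The substantive step is to verify that $\varphi_0$ respects the balancing relation, i.e.
\[ \la \eta\bigl((x\cdot a)^*\bigr), \eta(b)\ra = \la \eta(x^*), \eta(a\cdot b)\ra, \qquad x,b \in K_1,\; a \in \Alg.\]
For the left-hand side I would write $(xa)^* = a^* x^*$ and apply the cocycle relation \eqref{cocyclerelation} to get $\eta(a^* x^*) = \pi(a^*)\eta(x^*) + \eta(a^*)\Cou(x^*)$; since $x \in K_1$ and $\Cou$ is a $^*$-homomorphism, $\Cou(x^*) = \ol{\Cou(x)} = 0$, leaving $\eta(a^* x^*) = \pi(a^*)\eta(x^*)$. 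Using that $\pi$ is a $^*$-representation, $\pi(a^*) = \pi(a)^*$, the left-hand side becomes $\la \pi(a)^*\eta(x^*), \eta(b)\ra = \la \eta(x^*), \pi(a)\eta(b)\ra$. For the right-hand side, the cocycle relation gives $\eta(ab) = \pi(a)\eta(b) + \eta(a)\Cou(b)$, and $\Cou(b) = 0$ because $b \in K_1$, so $\eta(ab) = \pi(a)\eta(b)$; thus the right-hand side equals $\la \eta(x^*), \pi(a)\eta(b)\ra$ as well. The two sides coincide, so $\varphi_0$ descends to the quotient, giving the desired $\varphi$ on $K_1 \otimes_{\Alg} K_1$.

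Uniqueness is immediate, since the elementary tensors $a \otimes b$ span $K_1 \otimes_{\Alg} K_1$ and \eqref{eq-def-phi} prescribes $\varphi$ on all of them. The only point requiring genuine care — and the place where the hypotheses are really used — is the balancing computation: it hinges simultaneously on the cocycle identity \eqref{cocyclerelation}, on $\pi$ being a $^*$-homomorphism (so that the module structure interacts correctly with the adjoint in the inner product), and on $\Cou$ vanishing on $K_1$ to kill the $\Cou$-terms on both sides. The bilinearity and uniqueness are routine; I expect the adjointability/$^*$-representation step, i.e.\ moving $\pi(a)$ across the inner product, to be the main thing to get right, which is why the restriction to $a, b \in K_1$ (rather than all of $\Alg$) is exactly what makes the augmentation terms disappear.
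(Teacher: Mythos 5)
Your proof is correct and follows essentially the same route as the paper's: both define the bilinear map on $K_1\otimes_{\mathbb C}K_1$ and verify the balancing relation by applying the cocycle identity \eqref{cocyclerelation}, using that $\Cou$ vanishes on $K_1$ to kill the augmentation terms and that $\pi$ is a $^*$-representation to move $\pi(a)$ across the inner product. Your write-up merely makes explicit the bilinearity and uniqueness points that the paper leaves implicit.
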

\begin{proof}
Denote the representation for which $\eta$ is a cocycle by $\pi$. It is clear by linearity that the map $\tilde{\varphi}:K_1\otimes K_1 \to\mathbb{C}$,
\begin{equation} \label{defvarphi}
\tilde{\varphi}(a\otimes b) = \langle \eta(a^*), \eta(b)\rangle, \;\; a, b \in K_1
\end{equation}
is well-defined. To get $\varphi$, we show that for $a,c\in K_1$, $b\in \Alg$, we have
\begin{eqnarray*}
\tilde{\varphi}(ab\otimes c) &=& \langle \eta(b^*a^*), \eta(c)\rangle = \langle\pi(b)^* \eta(a^*), \eta(c)\rangle \\
&=& \langle \eta(a^*),\pi(b) \eta(c)\rangle = \langle \eta(a^*), \eta(bc)\rangle \\
&=& \tilde{\varphi}(a\otimes bc).
\end{eqnarray*}
\end{proof}

We can naturally view $\bc$ as an $\Alg$-bimodule with both actions given by the counit. Then the coboundary of a functional $\omega: \Alg \to \bc$ is the map $\partial \omega: \Alg \ot \Alg \to \bc$ defined as $\partial \omega(a \ot b) = - \omega(ab) + \Cou(a) \omega(b) + \omega(a) \Cou(b)$, $a, b \in \Alg$.  Consider  the exact sequence
\[
0 \to H_2(\Alg,\mathbb{C})\to K_1\otimes_{\Alg} K_1 \to K_1 \to H_1(\Alg,\mathbb{C})\to 0
\]
stated for group algebras in \cite[Lemma 5.6]{netzer+thom13}, but holding more generally for unital $*$-algebras with a character, as shown in \cite{UweAndreas} ($K_1\otimes_{\Alg} K_1$ denotes the tensor product of $\Alg$-modules over $\Alg$).
Earlier remarks and simple observations imply that a functional $L:\Alg\to \mathbb{C}$ for which $\eta$ yields the coboundary
exists if and only if $\varphi$  vanishes on the kernel of the multiplication map from $K_1\otimes_{\Alg} K_1\to K_1$. The exactness of the displayed sequence means that one can interpret this in terms of $H_2(\Alg,\mathbb{C})$.
Furthermore $L$ is determined by $\varphi$ up to a linear functional on $H_1(\Alg,\mathbb{C})$.

Finally note that given a cocycle $\eta:\Alg \to D$ the map $\tilde{\varphi}:K_1\otimes K_1 \to \mathbb{C}$ defined by \eqref{defvarphi} is a $2$-cocycle, since for any $a, b, c \in K_1$
\begin{eqnarray*}
\partial\tilde{\varphi}(a\otimes b\otimes c) &=&
\varepsilon(a) \langle\eta(b^*),\eta(c)\rangle - \big\langle\eta\big((ab)^*\big),\eta(c)\big\rangle + \langle\eta(a^*),\eta(bc)\rangle - \langle\eta(a^*),\eta(b)\rangle\varepsilon(c) \\
&& = - \langle \pi(b)^*\eta(a^*) \eta(c)\rangle +  \langle \eta(a^*), \pi(b)\eta(c)\rangle= 0.
\end{eqnarray*}

\section{Main result} \label{MainresultSection}

As mentioned in the introduction, in the article \cite{Kyed} Kyed, following the unpublished notes of Vergnioux,  showed that Question \ref{mainquest} has a positive answer if the cocycle $\eta$ is \emph{real}, i.e.\ when it satisfies a certain symmetry relation with respect to the antipode (see Definition 4.1 in \cite{Kyed}, but note a difference resulting from another convention for scalar products). In this main section we extend this result, considering an $\alpha$-twisted reality condition (see Theorem \ref{mainthm}). Throughout the section \textbf{we fix a Hopf $^*$-algebra $\Alg$ and an admissible bijection $\alpha:\Alg \to \Alg$ and write $\gamma:=(\id + \alpha):\Alg \to \Alg$}.

The first lemma generalizes Lemma 4.8 in \cite{Kyed} (due to Vergnioux).

\begin{lem} \label{cocyclealpha}
Let $\eta: \Alg \to D$ be a cocycle for a representation $\pi$. Then the following equalities hold for all $a \in \Alg$:
\begin{rlist}
\item $\eta(S_\alpha(a)) = - \pi(S_{\alpha}(a_{(1)})) \eta(\alpha(a_{(2)})),$
\item $ \eta(\alpha(a)) = - \pi(\alpha(a_{(1)})) \eta(S_\alpha(a_{(2)})),$
\item $\eta(S_\alpha(a)^*) = - \pi(S_{\alpha}(a_{(2)}))^* \eta(\alpha(a_{(1)})^*),$
\item $ \eta(\alpha(a)^*) = - \pi(\alpha(a_{(2)}))^* \eta(S_\alpha(a_{(1)})^*),$
\end{rlist}
\end{lem}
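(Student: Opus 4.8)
The plan is to prove part (i) directly from the twisted antipode relation \eqref{twistedantipoderelation} together with the cocycle relation \eqref{cocyclerelation}, and then to obtain the remaining three parts by symmetry, i.e.\ by applying suitable combinations of $\alpha$, $S$, and $^*$ to (i).

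\begin{proof}
We first prove (i). Fix $a \in \Alg$. The key idea is to apply the cocycle $\eta$ to the left-hand side of the twisted antipode relation \eqref{twistedantipoderelation}. Writing $\Com(a) = a_{(1)} \ot a_{(2)}$ in Sweedler's notation, relation \eqref{twistedantipoderelation} gives $S_{\alpha}(a_{(1)}) \alpha(a_{(2)}) = \Cou(a) 1$. Applying $\eta$ to both sides and using that $\eta(1) = 0$ (so the right-hand side vanishes), the cocycle relation \eqref{cocyclerelation} applied to the product $S_{\alpha}(a_{(1)}) \cdot \alpha(a_{(2)})$ yields
\[ 0 = \eta\big( S_{\alpha}(a_{(1)}) \alpha(a_{(2)}) \big) = \pi\big(S_{\alpha}(a_{(1)})\big)\, \eta\big(\alpha(a_{(2)})\big) + \eta\big(S_{\alpha}(a_{(1)})\big)\, \Cou\big(\alpha(a_{(2)})\big).\]
By Proposition \ref{propadmissbij}(iii) we have $\Cou \circ \alpha = \Cou$, so the second summand simplifies using $\eta(S_{\alpha}(a_{(1)}))\Cou(a_{(2)})$, and the counit property $a_{(1)}\Cou(a_{(2)}) = a$ (applied after $S_\alpha$, which is linear) collapses this term to $\eta(S_{\alpha}(a))$. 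Rearranging gives exactly
\[ \eta(S_{\alpha}(a)) = -\pi\big(S_{\alpha}(a_{(1)})\big)\,\eta\big(\alpha(a_{(2)})\big),\]
which is (i).

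To obtain (ii), I would run the same argument starting from the other half of \eqref{twistedantipoderelation}, namely $\alpha(a_{(1)}) S_{\alpha}(a_{(2)}) = \Cou(a) 1$. Applying $\eta$, the cocycle relation, and $\Cou \circ \alpha = \Cou$ together with the counit property $\Cou(a_{(1)}) a_{(2)} = a$ then give $\eta(\alpha(a)) = -\pi(\alpha(a_{(1)}))\eta(S_{\alpha}(a_{(2)}))$, which is (ii). One must be careful here that after applying $\eta$ the surviving counit-term now involves the \emph{first} leg, so the collapse uses $\Cou(a_{(1)}) a_{(2)} = a$ rather than $a_{(1)}\Cou(a_{(2)}) = a$; this is the only structural difference between the derivations of (i) and (ii).

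Finally, parts (iii) and (iv) follow from (i) and (ii) by applying the adjoint. I would take (i), apply $^*$ inside the cocycle argument and use the standard rules $\eta(x^*) $ paired against the $^*$-homomorphism property $\pi(x)^* = \pi(x^*)$, together with the compatibility $S_{\alpha}\circ ^* \circ S_{\alpha} \circ ^* = \id$ from Proposition \ref{propadmissbij}(v) and the coproduct rule $(S_{\alpha}\ot S_{\alpha})\circ \Sigma \circ \Com = \Com \circ S_{\alpha}$ from Proposition \ref{propadmissbij}(vi). The tensor flip $\Sigma$ in (vi) is exactly what interchanges the roles of the first and second legs, which explains why $a_{(2)}$ appears in the $\pi$-slot and $a_{(1)}$ in the $\eta$-slot of (iii)--(iv), reversing the leg order of (i)--(ii). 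Concretely, replacing $a$ by $a^*$ in (i) and taking adjoints, then substituting the Sweedler legs of $S_\alpha(a)$ via (vi), produces (iii); the analogous manipulation of (ii) produces (iv). The only genuine obstacle is bookkeeping: one must track carefully how $^*$, $S_\alpha$, and the flip $\Sigma$ permute the Sweedler indices, since a single misplaced leg inverts the order of factors. Once the identities (v) and (vi) of Proposition \ref{propadmissbij} are invoked to govern this, the computation is routine.
\end{proof}
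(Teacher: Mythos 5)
Your derivations of (i) and (ii) are correct and coincide with the paper's own proof, which simply applies $\eta$ to the two twisted antipode relations \eqref{twistedantipoderelation}, uses $\eta(1)=0$, $\Cou\circ\alpha=\Cou$ and the counit relation.

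The passage from (i)--(ii) to (iii)--(iv) has a genuine gap. The identity (i) is an equality of \emph{vectors} in $D$, so ``taking adjoints'' of it is not an available operation, and since no reality is assumed in this lemma, $\eta(x^*)$ bears no a priori relation to $\eta(x)$; hence (iii) cannot be extracted from (i) by formal adjunction. Nor does substitution work: the only way to make the left-hand side of (i) equal to $\eta(S_\alpha(a)^*)$ is to replace $a$ by $b=(S_\alpha^2(a))^*$, and then Proposition \ref{propadmissbij} (v) and (vi) give $S_\alpha(b_{(1)})=S_\alpha(a_{(1)})^*$ but $\alpha(b_{(2)})=(S^2\alpha(a_{(2)}))^*$, so one obtains $\eta(S_\alpha(a)^*)=-\pi(S_\alpha(a_{(1)})^*)\,\eta\big((S^2\alpha(a_{(2)}))^*\big)$ --- the wrong leg in the $\pi$-slot and a spurious $S^2$. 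The correct (and short) route, which is what the paper's ``apply $\eta\circ{}^*$'' means, is to apply $^*$ to the twisted antipode relations \emph{before} applying $\eta$: since $^*$ is an anti-homomorphism, $\alpha(a_{(1)})S_\alpha(a_{(2)})=\Cou(a)1$ becomes $S_\alpha(a_{(2)})^*\,\alpha(a_{(1)})^*=\overline{\Cou(a)}\,1$, and applying $\eta$ together with the cocycle relation, $\pi(x^*)=\pi(x)^*$, $\Cou(x^*)=\overline{\Cou(x)}$ and $\overline{\Cou(a_{(1)})}\,S_\alpha(a_{(2)})^*=S_\alpha(a)^*$ yields (iii); the other relation yields (iv). In particular it is the order reversal caused by $^*$, not the flip in Proposition \ref{propadmissbij}(vi), that accounts for the interchange of the Sweedler legs in (iii)--(iv).
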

\begin{proof}
It suffices to apply $\eta$ (or $\eta \circ ^*$) to the twisted antipode relations \eqref{twistedantipoderelation} and use the fact that $\eta(1)=0$ together with the counit relation.
\end{proof}

\begin{deft}
We say that a generating functional $L:\Alg \to \bc$ is $S_{\alpha}$-invariant if $L\circ S_{\alpha} = L$.
\end{deft}

The next lemma establishes the  algebraic property of the cocycle associated to an $S_{\alpha}$-invariant generating functional (for $\alpha=\id$ it is the reality condition of Kyed, mentioned above).

\begin{lem} \label{Lemmacocyclereal}
Let $L:\Alg \to \bc$ be an $S_{\alpha}$-invariant generating functional and assume that $\eta: \Alg \to D$ is a cocycle which yields the coboundary of $L$. Then
for any $a, b \in \Alg$
\[ \langle \eta(a), \eta(b) \ra = \la \eta (S_{\alpha} (b)^*), \eta(S_{\alpha}(a^*)) \ra. \]
\end{lem}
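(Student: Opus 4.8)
The plan is to rewrite both sides of the claimed identity in terms of the functional $L$ using the second, equivalent form of the relation \eqref{formula:yieldscoboundary}, namely $\langle \eta(x), \eta(y) \rangle = L(x^* y)$ for $x, y \in K_1$, and then to match the two resulting values of $L$ using that $S_\alpha$ is an anti-homomorphism and that $L \circ S_\alpha = L$.

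First I would reduce to the case $a, b \in K_1$. Since $\eta(1)=0$ and $\eta$ is linear, $\eta(x) = \eta(x - \Cou(x) 1)$ for every $x \in \Alg$, so all four vectors appearing in the statement are unchanged if $a$ and $b$ are replaced by their counit-reduced versions $a_0 := a - \Cou(a)1$ and $b_0 := b - \Cou(b)1$, which lie in $K_1$. Here I use Proposition \ref{propadmissbij}(ii),(iii): because $S_\alpha(1)=1$ and $\Cou \circ S_\alpha = \Cou$, the elements $S_\alpha(a^*)$ and $S_\alpha(b)^*$ differ from $S_\alpha(a_0^*)$ and $S_\alpha(b_0)^*$ only by scalar multiples of $1$, so that $\eta(S_\alpha(a^*)) = \eta(S_\alpha(a_0^*))$ and $\eta(S_\alpha(b)^*) = \eta(S_\alpha(b_0)^*)$; moreover these latter arguments lie in $K_1$, since $\Cou(S_\alpha(a_0^*)) = \overline{\Cou(a_0)} = 0$ and similarly for $b_0$ (recall that $K_1$ is $^*$-closed, as $\Cou$ is a $^*$-homomorphism). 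Hence it suffices to prove the identity for $a, b \in K_1$.

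For such $a, b$ the formula $\langle \eta(x), \eta(y) \rangle = L(x^* y)$, valid whenever $x, y \in K_1$, applies to both sides: the left-hand side becomes $\langle \eta(a), \eta(b) \rangle = L(a^* b)$, while the right-hand side, whose arguments $S_\alpha(b)^*$ and $S_\alpha(a^*)$ both lie in $K_1$ by the previous step, becomes $\langle \eta(S_\alpha(b)^*), \eta(S_\alpha(a^*)) \rangle = L\big(S_\alpha(b)\, S_\alpha(a^*)\big)$. It then remains to show $L(a^* b) = L\big(S_\alpha(b)\, S_\alpha(a^*)\big)$. Since $S_\alpha$ is an anti-homomorphism by Proposition \ref{propadmissbij}(v), we have $S_\alpha(b)\, S_\alpha(a^*) = S_\alpha(a^* b)$, and the $S_\alpha$-invariance $L \circ S_\alpha = L$ closes the argument.

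The only point requiring genuine care is the bookkeeping in the reduction step: one must verify that passing to the counit-reduced arguments leaves all four vectors unchanged \emph{and} places the two arguments on the right-hand side inside $K_1$, so that the $L$-formula is legitimately applicable there. This is precisely where the elementary properties of $S_\alpha$ collected in Proposition \ref{propadmissbij} (that $S_\alpha$ fixes $1$, commutes with $\Cou$ in the sense $\Cou\circ S_\alpha=\Cou$, and reverses products) are all used; once the reduction is set up correctly, the remainder of the proof is a one-line manipulation of $L$.
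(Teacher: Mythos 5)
Your proof is correct and follows essentially the same route as the paper's: both express the two inner products as values of $L$ on products via the coboundary relation and then conclude from the anti-multiplicativity of $S_\alpha$ together with $L\circ S_\alpha = L$. The only cosmetic difference is that you first reduce to $a,b\in K_1$ (which lets you use the short form $L(xy)=\la\eta(x^*),\eta(y)\ra$ and sidesteps the hermitianity of $L$), whereas the paper applies the full formula \eqref{formula:yieldscoboundary} on all of $\Alg$ and cancels the counit terms directly.
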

\begin{proof}
Choose $a, b \in \Alg$ and compute
\begin{align*} \ol{\Cou(a)} L(b) + \ol{L(a)} \Cou(b) &+ \la \eta(a), \eta(b) \ra = L(a^*b) = L(S_{\alpha}(a^*b)) = L(S_{\alpha}(b) S_{\alpha}(a^*))
\\&= \Cou(S_{\alpha}(b)) L(S_{\alpha}(a^*)) + L(S_{\alpha}(b)) \Cou(S_{\alpha}(a^*)) + \la \eta (S_{\alpha} (b)^*), \eta(S_{\alpha}(a^*)) \ra
\\&= \Cou(b) \ol{L(a)} + L(b) \ol{\Cou(a)} + \la \eta (S_{\alpha} (b)^*), \eta(S_{\alpha}(a^*)) \ra.
\end{align*}
\end{proof}

This motivates the following definition.

\begin{deft}  \label{defrealcocycle}
We say that a cocycle  $\eta: \Alg \to D$ is $\alpha$-real if
\begin{equation} \label{defrealcocycleformula} \langle \eta(a), \eta(b) \ra = \la \eta (S_{\alpha} (b)^*), \eta(S_{\alpha}(a^*)) \ra, \;\;\;a, b \in \Alg. \end{equation}
\end{deft}

In particular when $\alpha=\id$ we recover the notion of a real cocycle introduced by Vergnioux and Kyed.

\begin{lem} \label{Lemmacocyclerealformula}
Let $L:\Alg \to \bc$ be an $S_{\alpha}$-invariant generating functional and assume that $\eta: \Alg \to D$ is a cocycle which yields the coboundary of $L$. Then
for any $a \in \Alg$
\begin{equation} \label{definingformula} L(\gamma(a))= - \la \eta(S_{\alpha}(a_{(1)})^*), \eta(\alpha(a_{(2)})) \ra = - \la \eta(\alpha(a_{(1)})^*), \eta(S_{\alpha}(a_{(2)})) \ra.\end{equation}
\end{lem}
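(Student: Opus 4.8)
The plan is to feed the two factorisations of the twisted antipode relation \eqref{twistedantipoderelation} into the coboundary formula \eqref{formula:yieldscoboundary} and simply read off the inner-product terms. For the first equality I would start from the identity $S_\alpha(a_{(1)}) \alpha(a_{(2)}) = \Cou(a) 1$ of Proposition \ref{propadmissbij}(vii), apply the (linear) functional $L$ to both sides, and use $L(1)=0$ to see that the left-hand side equals $\Cou(a) L(1) = 0$. On the other hand, applying the coboundary relation \eqref{formula:yieldscoboundary} with $x = S_\alpha(a_{(1)})$ and $y = \alpha(a_{(2)})$ — summed over the Sweedler legs, which is legitimate because \eqref{formula:yieldscoboundary} is bilinear and so may be applied to each summand of $\Com(a)$ before summing — produces the three terms $\Cou(S_\alpha(a_{(1)})) L(\alpha(a_{(2)}))$, $L(S_\alpha(a_{(1)})) \Cou(\alpha(a_{(2)}))$ and $\la \eta(S_\alpha(a_{(1)})^*), \eta(\alpha(a_{(2)})) \ra$.

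Next I would collapse the two scalar terms. Using $\Cou \circ S_\alpha = \Cou$ and $\Cou \circ \alpha = \Cou$ from Proposition \ref{propadmissbij}(iii), together with the counit axiom $\Cou(a_{(1)}) a_{(2)} = a = a_{(1)} \Cou(a_{(2)})$ and linearity of $\alpha$, $S_\alpha$ and $L$, the first term becomes $L(\alpha(a))$ and the second becomes $L(S_\alpha(a))$. The $S_\alpha$-invariance $L \circ S_\alpha = L$ then rewrites the latter as $L(a)$. Altogether the identity reads $0 = L(\alpha(a)) + L(a) + \la \eta(S_\alpha(a_{(1)})^*), \eta(\alpha(a_{(2)})) \ra$, and since $L(a) + L(\alpha(a)) = L(\gamma(a))$ by the definition $\gamma = \id + \alpha$, rearranging gives precisely the first claimed equality.

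For the second equality I would run the identical argument, starting this time from the other half of \eqref{twistedantipoderelation}, namely $\alpha(a_{(1)}) S_\alpha(a_{(2)}) = \Cou(a) 1$, and applying \eqref{formula:yieldscoboundary} with $x = \alpha(a_{(1)})$, $y = S_\alpha(a_{(2)})$. The same three simplifications (the counit compatibilities, the counit axiom, and $S_\alpha$-invariance) again deliver $0 = L(a) + L(\alpha(a)) + \la \eta(\alpha(a_{(1)})^*), \eta(S_\alpha(a_{(2)})) \ra$, which is the second equality.

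I do not anticipate a genuine obstacle here: once one decides to substitute the twisted antipode relation into the coboundary formula, the proof is a short piece of Sweedler-notation bookkeeping. The only points demanding care are the correct pairing of the Sweedler legs — applying \eqref{formula:yieldscoboundary} termwise and invoking bilinearity — and the consistent use of the compatibilities $\Cou \circ S_\alpha = \Cou$ and $\Cou \circ \alpha = \Cou$. It is worth noting that this argument requires no appeal to Lemma \ref{cocyclealpha}; it rests solely on the twisted antipode relation, the counit compatibilities, and the $S_\alpha$-invariance of $L$.
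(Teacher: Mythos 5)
Your argument is correct and is essentially identical to the paper's own proof: both apply $L$ to the two twisted antipode relations \eqref{twistedantipoderelation}, expand via the coboundary formula \eqref{formula:yieldscoboundary}, and then simplify using the counit relation, $\Cou\circ S_\alpha=\Cou=\Cou\circ\alpha$, and the $S_\alpha$-invariance of $L$. No differences worth noting.
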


\begin{proof}
Let $a\in \Alg$ and apply $L$ to the twisted antipode relation \eqref{twistedantipoderelation}. This yields
\[L(  S_{\alpha}(a_{(1)}) \alpha(a_{(2)}) ) =  0 = L(\alpha(a_{(1)}) S_{\alpha}(a_{(2)}))\]
and further via \eqref{formula:yieldscoboundary}
\[0 = \Cou(S_{\alpha}(a_{(1)}))L(\alpha(a_{(2)}) ) +  L(S_{\alpha}(a_{(1)}))\Cou(\alpha(a_{(2)})) + \la \eta(S_{\alpha}(a_{(1)})^*), \eta(\alpha(a_{(2)})) \ra, \]
\[0 = \Cou(\alpha(a_{(1)})) L(S_{\alpha}(a_{(2)})) + L(\alpha(a_{(1)})) \Cou( S_{\alpha}(a_{(2)})) + \la \eta(\alpha(a_{(1)})^*), \eta(S_{\alpha}(a_{(2)})) \ra.\]
Using the counit relation and the fact that both $L$ and $\Cou$ are invariant under $S_\alpha$ yields
\[0 = L(\alpha(a)) + L(a) + \la \eta(S_{\alpha}(a_{(1)})^*), \eta(\alpha(a_{(2)})) \ra,\]
\[ 0 = L(a) +  L(\alpha(a)) +\la \eta(\alpha(a_{(1)})^*), \eta(S_{\alpha}(a_{(2)})) \ra,\]
which ends the proof.
\end{proof}

In fact the equality of the two formulas appearing above is a general fact.

\begin{lem}\label{twoformulaone}
Let $\eta: \Alg \to D$ be a cocycle. Then for any $a \in \Alg$
 \[ \la \eta(S_{\alpha}(a_{(1)})^*), \eta(\alpha(a_{(2)})) \ra = \la \eta(\alpha(a_{(1)})^*), \eta(S_{\alpha}(a_{(2)})) \ra.\]
\end{lem}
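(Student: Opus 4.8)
The plan is to transform the left-hand side into the right-hand side by applying the identities of Lemma \ref{cocyclealpha} and then collapsing everything via the twisted antipode and counit relations. The starting observation is that the first entry $\eta(S_{\alpha}(a_{(1)})^*)$ of the left-hand inner product has exactly the shape appearing in Lemma \ref{cocyclealpha}(iii). First I would apply that identity to $\eta(S_{\alpha}(a_{(1)})^*)$, which introduces a further splitting of $a_{(1)}$ and hence a triple Sweedler expansion: writing $\Delta^{(2)}(a) = a_{(1)} \ot a_{(2)} \ot a_{(3)}$ and using coassociativity to relabel, one obtains
\[ \eta(S_{\alpha}(a_{(1)})^*) = -\pi(S_{\alpha}(a_{(2)}))^* \, \eta(\alpha(a_{(1)})^*), \]
with the original second entry $\eta(\alpha(a_{(2)}))$ being promoted to $\eta(\alpha(a_{(3)}))$.

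Next I would push the adjoint across the inner product. Since the inner products are linear on the right and the scalar $-1$ is real, the left-hand side becomes
\[ -\la \eta(\alpha(a_{(1)})^*), \, \pi(S_{\alpha}(a_{(2)}))\, \eta(\alpha(a_{(3)})) \ra. \]
Now I would rewrite $\pi(S_{\alpha}(a_{(2)}))\eta(\alpha(a_{(3)}))$ using the cocycle relation \eqref{cocyclerelation} in the form $\pi(x)\eta(y) = \eta(xy) - \eta(x)\Cou(y)$, with $x = S_{\alpha}(a_{(2)})$ and $y = \alpha(a_{(3)})$. This splits the expression into two terms, one containing $\eta(S_{\alpha}(a_{(2)})\alpha(a_{(3)}))$ and the other $\eta(S_{\alpha}(a_{(2)}))\Cou(\alpha(a_{(3)}))$.

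For the first term, the pair $a_{(2)} \ot a_{(3)}$ is (after coassociativity) the coproduct of the second factor of $\Delta(a)$, so the twisted antipode relation \eqref{twistedantipoderelation} gives $S_{\alpha}(a_{(2)})\alpha(a_{(3)}) = \Cou(a_{(2)})1$ summed over those indices; applying $\eta$ and using $\eta(1)=0$ makes this term vanish. For the second term, Proposition \ref{propadmissbij}(iii) gives $\Cou(\alpha(a_{(3)})) = \Cou(a_{(3)})$, and the counit relation $\sum a_{(2)}\Cou(a_{(3)}) = a_{(2)}$ collapses the triple expansion back to the double one, leaving exactly $\la \eta(\alpha(a_{(1)})^*), \eta(S_{\alpha}(a_{(2)})) \ra$, which is the right-hand side.

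I expect the only real obstacle to be the Sweedler bookkeeping: one must be careful that when Lemma \ref{cocyclealpha}(iii) is applied to $a_{(1)}$, the induced triple coproduct is indexed so that the subsequent twisted antipode relation is genuinely applied to the middle-and-last factors (the coproduct of the second leg of $\Delta(a)$), while the first leg $a_{(1)}$ remains untouched throughout. Once the indices are tracked correctly via coassociativity, every other step is a direct application of a relation already established in the excerpt.
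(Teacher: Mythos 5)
Your proof is correct and is essentially the mirror image of the paper's argument: the paper expands the \emph{right} entry via Lemma \ref{cocyclealpha}(ii), transfers the adjoint to the left slot, and recollapses via Lemma \ref{cocyclealpha}(iv), whereas you expand the \emph{left} entry via (iii), transfer the adjoint to the right slot, and recollapse by re-deriving (in effect) identity (i) from the cocycle, twisted antipode and counit relations. The Sweedler bookkeeping you flag is handled correctly, so no changes are needed.
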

\begin{proof}
Say that $\eta$ is a cocycle for a representation $\pi$.
It suffices to pick $a \in \Alg$ and use consecutively relations (ii) and (iii) of Lemma \ref{cocyclealpha}:
\begin{align*} \la \eta(S_{\alpha}(a_{(1)})^*), & \eta(\alpha(a_{(2)})) \ra  = - \la \eta(S_{\alpha}(a_{(1)})^*), \pi(\alpha(a_{(2)})) \eta(S_\alpha(a_{(3)})) \ra
\\ &= - \la \pi(\alpha(a_{(2)}))^* \eta(S_{\alpha}(a_{(1)})^*),  \eta(S_\alpha(a_{(3)})) \ra=
\la \eta(\alpha(a_{(1)})^*), \eta(S_{\alpha}(a_{(2)})) \ra.
 \end{align*}
\end{proof}

\begin{tw}\label{cocycle->funct}
Let $\Alg$ be a Hopf $^*$-algebra, $\alpha:\Alg \to \Alg$ be an admissible bijection and let $\eta$ be an $\alpha$-real cocycle on $\Alg$. Then the formula \eqref{definingformula} defines an $S_{\alpha}$-invariant generating functional $L$ such that $\eta$ yields the coboundary of $L$.
\end{tw}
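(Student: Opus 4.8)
The plan is to verify three things in turn: that the formula \eqref{definingformula} genuinely defines a functional $L$ on all of $\Alg$, that this $L$ is an $S_{\alpha}$-invariant generating functional, and finally that $\eta$ yields the coboundary of $L$. The starting point is that by hypothesis $\gamma = \id + \alpha$ is a bijection (Definition \ref{Def:admissible}(iv)), so for any $x \in \Alg$ there is a unique $a \in \Alg$ with $\gamma(a) = x$, and I would simply \emph{define} $L(x)$ to be the right-hand side of \eqref{definingformula} evaluated at this $a$. Lemma \ref{twoformulaone} shows the two candidate expressions agree, so the definition is unambiguous; linearity of $L$ is then immediate from linearity of $\gamma^{-1}$, of $\eta$, of $S_{\alpha}$, of $\alpha$, and of the inner product in each slot.

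Next I would establish that $L$ vanishes at $1$ and is hermitian. For $L(1)$: since $\gamma(1) = 1 + \alpha(1) = 2\cdot 1$ by Proposition \ref{propadmissbij}(ii), and $\eta(1) = 0$, evaluating \eqref{definingformula} at $a$ with $\gamma(a) = 1$ (i.e. $a = \tfrac12 \cdot 1$, using $\Com(1) = 1 \ot 1$ and $\eta(1)=0$) forces $L(1) = 0$. Hermitianity is the more substantial of these two points; here is where I expect to lean on the $\alpha$-reality hypothesis \eqref{defrealcocycleformula} together with the symmetry relations in Lemma \ref{cocyclealpha}(iii)--(iv). The strategy is to compute $\ol{L(\gamma(a))}$ by conjugating \eqref{definingformula}, using that the inner product is conjugate-symmetric, and then to match the result against $L$ evaluated at an appropriate element; the defining property of an $\alpha$-real cocycle is precisely engineered (compare Lemma \ref{Lemmacocyclereal}) to convert the conjugated expression back into one of the allowed forms for $L$.

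The heart of the argument is proving that $\eta$ yields the coboundary of $L$, i.e.\ that \eqref{formula:yieldscoboundary} holds. Equivalently, as noted after Definition \ref{deft:yieldscoboundary} and in the homological discussion, it suffices to show that the bilinear form $\varphi(a \ot b) = \la \eta(a^*), \eta(b)\ra$ on $K_1 \otimes_{\Alg} K_1$ descends through the multiplication map $K_1 \otimes_{\Alg} K_1 \to K_1$, landing on $L|_{K_1}$; in other words, that $L(ab) = \la \eta(a^*), \eta(b)\ra$ for $a, b \in K_1$. I would compute $L(ab)$ directly from the definition by expressing $ab$ as $\gamma(\gamma^{-1}(ab))$ and unwinding \eqref{definingformula}, the goal being to reproduce $\la \eta(a^*), \eta(b)\ra$. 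The cocycle relation \eqref{cocyclerelation}, the twisted antipode relation \eqref{twistedantipoderelation}, and the symmetry identities of Lemma \ref{cocyclealpha} should all enter here, together with the compatibility $(\alpha \ot \alpha)\circ \Com = \Com \circ \alpha$ to handle how $\alpha$ and the coproduct interact on the product $ab$.

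The main obstacle I anticipate is this last computation: unlike the simpler Vergnioux--Kyed case ($\alpha = \id$), the presence of a nontrivial $\alpha$ means $S_{\alpha}$ is no longer an involution in the naive sense and one must carefully track how $\alpha$ threads through the Sweedler legs, using Proposition \ref{propadmissbij}(iv)--(vi) to commute $\alpha$ past $S$ and past $\Com$. The author's remark in the introduction that the construction is \emph{more involved} than Vergnioux--Kyed signals exactly that the bookkeeping of $\alpha$-twists in verifying \eqref{formula:yieldscoboundary} is where the real work lies; once that coboundary identity is in hand, $S_{\alpha}$-invariance of $L$ follows either directly from \eqref{definingformula} (the expression is manifestly symmetric under the twist built into $S_{\alpha}$) or from the general fact, recorded after Definition \ref{deft:yieldscoboundary}, that a functional satisfying \eqref{formula:yieldscoboundary} is a generating functional as soon as it is hermitian.
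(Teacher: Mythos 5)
There is a genuine gap at the step you yourself identify as the heart of the argument. Your plan is to compute $L(ab)$ for $a,b\in K_1$ ``directly from the definition by expressing $ab$ as $\gamma(\gamma^{-1}(ab))$ and unwinding \eqref{definingformula}.'' This route does not work: the formula \eqref{definingformula} expresses $L(\gamma(c))$ in terms of the Sweedler legs $c_{(1)}\ot c_{(2)}$ of the \emph{preimage} $c$, and since $\gamma=\id+\alpha$ is only a linear bijection (not an algebra map), you have no usable description of $\gamma^{-1}(ab)$ or of its coproduct in terms of $a$ and $b$. The paper's proof avoids evaluating $L$ at $ab$ altogether. It instead works with the auxiliary functional $L'=L\circ\gamma$, for which $L'(ab)=L(\gamma(ab))$ \emph{is} computable from \eqref{definingformula} (taking $ab$ itself as the preimage), and after a computation using the cocycle relation, the twisted antipode relation and $\alpha$-reality arrives at
\[
\partial L' \;=\; -\,\tilde{\varphi}\circ(\id\ot\id+\alpha\ot\alpha).
\]
It then uses the identities $\partial\omega=\omega\circ d$ and $\alpha\circ d=d\circ(\alpha\ot\alpha)$ to rewrite the left-hand side as $(\partial L)\circ(\id\ot\id+\alpha\ot\alpha)$, and finally invokes condition (v) of Definition \ref{Def:admissible} --- the bijectivity of $\id\ot\id+\alpha\ot\alpha$ --- to cancel and conclude $\partial L=-\tilde{\varphi}$. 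Note that condition (v) is in the definition of admissibility precisely for this cancellation; your proposal never uses it, which is a signal that the intended mechanism is missing.

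Two smaller issues. First, your claim that $S_{\alpha}$-invariance ``follows directly from \eqref{definingformula} (the expression is manifestly symmetric under the twist)'' overstates the case: the paper needs a separate computation, substituting $b=\gamma^{-1}(a)$, using $S_{\alpha}(a)=\gamma(S_{\alpha}(b))$, Proposition \ref{propadmissbij}(vi) and $\alpha$-reality. Your fallback --- the remark after Definition \ref{deft:yieldscoboundary} --- only yields that a hermitian functional satisfying \eqref{formula:yieldscoboundary} is a generating functional; it says nothing about $S_{\alpha}$-invariance. Second, in the hermitianity step the essential trick is to identify the correct $\gamma$-preimage of $a^*$, namely $c=\alpha(b)^*=\alpha^{-1}(b^*)$ where $b=\gamma^{-1}(a)$ (so that $\gamma(c)=\alpha(b)^*+b^*=a^*$, using Definition \ref{Def:admissible}(ii)); your sketch points in the right direction but does not supply this, and without it the conjugated expression cannot be matched against one of the two allowed forms of \eqref{definingformula}.
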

\begin{proof}
Note first that both expressions on the right hand side of \eqref{definingformula} coincide due to Lemma \ref{twoformulaone}.

We divide the proof into several parts. First we show that $L$ is hermitian. To that end fix $a$ in $\Alg$ and put $b= \gamma^{-1}(a)$. Then $a^* = \gamma(b)^* = b^* + \alpha(b)^* = \gamma(c)$, where $c = \alpha(b)^* = \alpha^{-1}(b^*)$. This means in particular that $\alpha(c_{(1)})^*= b_{(1)}$ and $c_{(2)}^* = \alpha(b_{(2)})$. Thus
\begin{align*} L(a^*) &= L(\gamma(c)) = - \la \eta(\alpha(c_{(1)})^*), \eta(S_{\alpha}(c_{(2)})) \ra = - \la \eta(b_{(1)}), \eta(S_{\alpha}(c_{(2)})) \ra
\\&=  - \la \eta(S_{\alpha} (S_{\alpha} (b_{(1)})^*)^*), \eta(S_{\alpha}(c_{(2)}))  \ra =
 - \la \eta(c_{(2)}^*) , \eta(S_{\alpha} (b_{(1)})^*) \ra
 \\& = - \la \eta(\alpha(b_{(2)})) , \eta(S_{\alpha} (b_{(1)})^*) \ra = \overline{ - \la \eta(S_{\alpha} (b_{(1)})^*) , \eta(\alpha(b_{(2)}))  \ra}= \overline{L(a)},
 \end{align*}
where we first used the second equality in \eqref{definingformula}, then Proposition \ref{propadmissbij} (v), then the fact that $\eta$ is $\alpha$-real and finally the  first equality in  \eqref{definingformula}.

In the second step we show that $\eta$ yields the coboundary of $L$. Consider first an auxiliary functional $L'=L\circ \gamma$. Let $a, b \in \Alg$. Then applying the second equality in \eqref{definingformula} and the fact that $\alpha$ is a homomorphism we obtain
\[ L'(ab) = - \langle\eta\big(\alpha(b_{(1)})^*\alpha(a_{(1)})^*\big),\eta\big(S_\alpha(b_{(2)})S_\alpha(a_{(2)}\big)\rangle,
\]
which, when we declare that $\eta$ is a cocycle for a representation $\pi$, and  use the  cocycle property \eqref{cocyclerelation} leads to
\begin{align*}
L'(ab)=&-  \left\langle\eta\big(\alpha(a_{(1)})^*\big), \pi\big(\alpha(b_{(1)})S_\alpha(b_{(2)})\big)\eta\big(S_\alpha(a_{(2)})\big)\right\rangle
\\& -
\left\langle\eta\big(\alpha(a_{(1)})^*\big), \pi\big(\alpha(b_{(1)})\big)\eta\big(S_\alpha(b_{(2)})\big)\right\rangle\Cou(a_{(2)})
\\& - \Cou(a_{(1)}) \left\langle\eta\big(\alpha(b_{(1)})^*\big), \pi\big(S_\alpha(b_{(2)})\big)\eta\big(S_\alpha(a_{(2)})\big)\right\rangle
\\& - \Cou(a_{(1)}) \Cou(a_{(2)})\left\langle\eta\big(\alpha(b_{(1)})^*\big),\eta\big(S_{\alpha}(b_{(2)})\big)\right\rangle.
\end{align*}
Further via the twisted antipode relation \eqref{twistedantipoderelation}, Lemma \ref{cocyclealpha} (ii) and (iii) and  the counit relation used in the first equality we obtain
\begin{align*}
L'(ab)=& - \Cou(b)\left\langle\eta\big(\alpha(a_{(1)})^*\big),\eta\big(S_{\alpha}(a_{(2)})\big)\right\rangle +
\left\langle\eta\big(\alpha(a)^*\big),\eta\big(\alpha(b)\big)\right\rangle +  \left\langle\eta\big(S_{\alpha}(b)^*\big),\eta\big(S_{\alpha}(a)\big)\right\rangle
\\&  -
 \Cou(a)\left\langle\eta\big(\alpha(b_{(1)})^*\big),\eta\big(S_{\alpha}(b_{(2)})\big)\right\rangle = L'(a)\Cou(b)  + \left\langle\eta\big(\alpha(a)^*\big),\eta\big(\alpha(b)\big)\right\rangle
 \\&+ \left\langle\eta\big(S_{\alpha}(b)^*\big),\eta\big(S_{\alpha}(a)\big)\right\rangle  + \Cou(a)L'(b)
\end{align*}
which in the end via the $\alpha$-reality of $\eta$ means that
\[L'(ab)= \Cou(a)L'(b) + L'(a)\Cou(b) + \left\langle\eta\big(\alpha(a)^*\big),\eta\big(\alpha(b)\big)\right\rangle + \left\langle\eta(a^*),\eta(b)\right\rangle.
\]
Therefore, using the language of Section \ref{NotPrel},  we obtain
\begin{align*}
\partial L'(a\otimes b) &= \Cou(a)L'(b) - L'(ab) + L'(a)\Cou(b) = - \langle\eta(a^*),\eta(b)\rangle -  \left\langle\eta\big(\alpha(a)^*\big),\eta\big(\alpha(b)\big)\right\rangle
\\
&= - \big(\tilde{\varphi}\circ({\rm id}\otimes{ \id}+\alpha\otimes\alpha)\big)(a\otimes b),
\end{align*}
for all $a,b\in \Alg$, i.e.\
\[
\partial L' = - \tilde{\varphi}\circ({\rm id}\otimes{\rm id}+\alpha\otimes\alpha).
\]

Denote by $d:\Alg\otimes \Alg\to \Alg$ the (linear) boundary operator,
\[
d(a\otimes b) = \varepsilon(a)b -ab+a\varepsilon(b), \;\;\; a, b \in \Alg.
\]
We then have for any linear functional  $\omega:\Alg\to\mathbb{C}$ the equality $\partial \omega = \omega\circ d$.
Since $\alpha$ is an $\Cou$-preserving homomorphism, we have also $
\alpha\circ d = d\circ(\alpha\otimes\alpha)$.
Thus
\begin{align*}
(\partial L)\circ({\rm id}\otimes {\rm id}+\alpha\otimes\alpha) =& L\circ d \circ({\rm id}\otimes {\rm id}+\alpha\otimes\alpha)
\\=& L\circ ({\rm id}+\alpha)\circ d  =\partial \big(L\circ({\rm id}+\alpha)\big) =
 -\tilde{\varphi}\circ({\rm id}\otimes {\rm id}+\alpha\otimes\alpha).
\end{align*}
Using finally the last defining property of the admissible bijection $\alpha$ we obtain the desired equality
\[
\partial L= -\tilde{\varphi},
\]
equivalent to the fact that $\eta$ yields the coboundary of $L$. As explained after Definition \ref{deft:yieldscoboundary} together with the fact that $L$ is hermitian it implies that $L$ is a generating functional.

It remains to show that $L$ is $S_{\alpha}$ invariant. Once again fix $a$ in $\Alg$ and put $b= \gamma^{-1}(a)$. Then $S_{\alpha}(a) = \gamma (S_{\alpha}(b))$ as $\alpha$ commutes with $S_{\alpha}$ and
\begin{align*} L(S_\alpha(a)) &= - \la \eta(S_{\alpha}(S_\alpha (b_{(2)}))^*), \eta(\alpha(S_\alpha(b_{(1)}))) \ra =
 - \la \eta(S_{\alpha}(S_\alpha (b_{(2)}))^*), \eta(S_\alpha(\alpha(b_{(1)}))) \ra =
 \\&= - \la \eta(\alpha(b_{(1)})^*),  \eta(S_\alpha (b_{(2)})) \ra = L(a),
\end{align*}
where we first used the first  equality in \eqref{definingformula} and Proposition  \ref{propadmissbij} (vi), then the fact that $\alpha$ commutes with $S_\alpha$, then $\alpha$-reality of $\eta$ and finally the  second equality in  \eqref{definingformula}. Note that this last result, i.e.\ the $S_{\alpha}$-invariance for the functional $L$ given by \eqref{definingformula} in the case of $\alpha=\id$ is Proposition 7.22 of \cite{DawFimSkaWhi}.

\end{proof}

The following theorem explains the title of the paper.

\begin{tw}\label{mainthm}
Let $\Alg$ be a Hopf $^*$-algebra and let $\alpha:\Alg \to \Alg$ be an admissible bijection. There exists a one-to-one correspondence, up to a unitary equivalence on the cocycle part, between $\alpha$-real cocycles $\eta$ on $\Alg$ and $S_{\alpha}$-invariant generating functionals $L$ on $\Alg$, so that given an $S_{\alpha}$-invariant  $L$ the corresponding $\alpha$-real cocycle $\eta$ yields its coboundary.
\end{tw}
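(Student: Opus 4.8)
The plan is to assemble the one-to-one correspondence asserted in Theorem \ref{mainthm} from the two directions that have already been established separately, together with the uniqueness-up-to-unitary-equivalence remark made after Definition \ref{deft:yieldscoboundary}. The correspondence is a statement about two maps being mutually inverse, so I would organise the proof around the two assignments: from an $S_\alpha$-invariant generating functional $L$ to an $\alpha$-real cocycle $\eta$, and from an $\alpha$-real cocycle $\eta$ back to an $S_\alpha$-invariant generating functional $L$.

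First I would set up the forward direction. Given an $S_\alpha$-invariant generating functional $L$, Proposition \ref{funct->cocycle} (Sch\"urmann's GNS construction) produces a cocycle $\eta\colon\Alg\to D$ which yields the coboundary of $L$. The point to check is that this $\eta$ is automatically $\alpha$-real: but this is precisely Lemma \ref{Lemmacocyclereal}, whose conclusion is exactly the defining formula \eqref{defrealcocycleformula} of an $\alpha$-real cocycle. So $L\mapsto\eta$ lands in the right class. Conversely, given an $\alpha$-real cocycle $\eta$, Theorem \ref{cocycle->funct} provides an $S_\alpha$-invariant generating functional $L$, defined by \eqref{definingformula}, for which $\eta$ yields the coboundary. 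Thus both assignments are well-defined between the two classes.

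Next I would verify that these assignments are mutually inverse, which is where the uniqueness observations do the work. Starting from an $\alpha$-real cocycle $\eta$, pass to $L$ via Theorem \ref{cocycle->funct}, then back to a cocycle $\eta'$ via Proposition \ref{funct->cocycle}. Both $\eta$ and $\eta'$ yield the coboundary of the same $L$, so by the remark after Definition \ref{deft:yieldscoboundary} they are unitarily equivalent; this is exactly the ``up to unitary equivalence on the cocycle part'' clause in the statement. Starting instead from an $S_\alpha$-invariant generating functional $L$, pass to a cocycle $\eta$ via Proposition \ref{funct->cocycle} and then back to a functional $L'$ via Theorem \ref{cocycle->funct}; here I must argue $L'=L$. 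Both are generating functionals for which the same $\eta$ yields the coboundary, so $\partial L=\partial L'=-\tilde\varphi$, meaning $L-L'$ is a character-like functional vanishing on products; combined with $L(1)=L'(1)=0$ and hermitianity this should force $L=L'$, using that a functional killing all of $K_1\cdot K_1$ together with the right normalisation is determined by \eqref{formula:yieldscoboundary} on $K_1$.

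The main obstacle I anticipate is this last equality $L'=L$ on the level of functionals rather than cocycles. Two generating functionals yielding the coboundary of the same cocycle agree on $K_1\otimes K_1$ through \eqref{formula:yieldscoboundary}, i.e.\ they agree on products $ab$ with $a,b\in K_1$; the genuine content is ruling out the ambiguity coming from $H_1(\Alg,\mathbb{C})$ flagged in Section \ref{NotPrel} (``$L$ is determined by $\varphi$ up to a linear functional on $H_1(\Alg,\mathbb{C})$''). The resolution is that both $L$ and $L'$ are pinned down by formula \eqref{definingformula}, which expresses $L(\gamma(a))$ entirely in terms of $\eta$ and hence leaves no free additive character; since $\gamma=\id+\alpha$ is a bijection by admissibility, $L$ is fully determined on all of $\Alg$ by $\eta$. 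I would therefore phrase the inverse check by noting that the functional manufactured in Theorem \ref{cocycle->funct} is the unique one satisfying \eqref{definingformula} for the given cocycle, so feeding an $S_\alpha$-invariant $L$ forward and backward reproduces $L$ exactly, completing the bijection.
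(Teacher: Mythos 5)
Your proposal is correct and follows essentially the same route as the paper, whose proof is simply the assembly of Proposition \ref{funct->cocycle}, Lemma \ref{Lemmacocyclereal}, Theorem \ref{cocycle->funct} and the unitary-equivalence remark after Definition \ref{deft:yieldscoboundary}. The one subtle point you flag --- that $L'=L$ cannot follow from the coboundary relation alone because of the $H_1$ ambiguity, but does follow because any $S_\alpha$-invariant $L$ with cocycle $\eta$ satisfies \eqref{definingformula} (Lemma \ref{Lemmacocyclerealformula}) and $\gamma$ is bijective --- is exactly the right resolution and is implicit in the paper's citation of the surrounding lemmas.
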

\begin{proof}
This is a straightforward consequence of Theorem \ref{cocycle->funct}, Proposition \ref{funct->cocycle} and comments before it and Lemma \ref{Lemmacocyclereal}.
\end{proof}

We list here an immediate corollary, modelled on Corollary 4.9 in \cite{FabioUweAnna}. It shows that from our point of view among the admissible bijections given by the scaling automorphism group of a compact quantum group $\tau_{\frac{i}{2}}$, the one leading to the symmetry under the unitary antipode, plays a distinguished role.

\begin{comment}, the first of which explains why (apart from $\alpha= \id_{\Alg}$) we are rather interested in admissible bijections which do not preserve the $^*$.

\begin{cor}
Let  $\Alg$ be a Hopf $^*$-algebra, let $\alpha:\Alg \to \Alg$ be an admissible bijection and let $\beta:\Alg \to \Alg$ be a Hopf $^*$-algebra  automorphism. Then there exists a one-to-one correspondence between between $\beta \circ \alpha$-real cocycles $\eta$ on $\Alg$ and $S_{\alpha \circ \beta}$-invariant invariant generating functionals $L$ on $\Alg$, so that given   $L$ the corresponding  cocycle $\eta$ yields its coboundary.
\end{cor}
\begin{proof}
It suffices to apply Theorem \ref{mainthm} to the cocycles $\eta \circ \beta^{-1}$ and generating functional $L\circ \beta^{-1}$.
\end{proof}

\end{comment}

\begin{cor}
Let $\QG$ be a compact quantum group and let $t\in \br \setminus \{\frac{1}{2}\}$. Suppose that a cocycle $\eta:\PolQG \to D$ is $\tau_{it}$-real. Then it is $\tau_{is}$-real for any $s \in \br$.
\end{cor}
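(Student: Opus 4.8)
The plan is to reformulate the $\tau_{it}$-reality condition in terms of the bilinear form $\tilde{\varphi}(x\ot y)=\la\eta(x^*),\eta(y)\ra$ of \eqref{defvarphi} (extended to $\PolQG\ot\PolQG$ by the same formula, which is harmless since $\eta(1)=0$), and then to exploit that the scaling automorphisms act diagonally on the standard basis of $\PolQG$. First I would observe that, writing $S_{\tau_{is}}=S\circ\tau_{is}$, the condition of Definition \ref{defrealcocycle} for $\alpha=\tau_{is}$ reads $\la\eta(a),\eta(b)\ra=\la\eta(S_{\tau_{is}}(b)^*),\eta(S_{\tau_{is}}(a^*))\ra$, which after the substitution $c=a^*$ is exactly
\[ \tilde{\varphi}(c\ot b)=\tilde{\varphi}\bigl(S_{\tau_{is}}(b)\ot S_{\tau_{is}}(c)\bigr),\qquad b,c\in\PolQG. \]
As both sides are bilinear, it suffices to verify this for $b,c$ ranging over the basis $\{u^\beta_{ij}\}$, that is, over homogeneous elements for the scaling group.

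Next I would set up the weight bookkeeping. To a basis element $u^\beta_{ij}$ I attach the weight $\mu=\log\bigl(q_i(\beta)/q_j(\beta)\bigr)$, so that $\tau_{iz}(x)=e^{-z\mu}x$ for a homogeneous $x$ of weight $\mu$ and all real $z$. Two facts are then needed: since each $\tau_z$ commutes with $S$, the antipode preserves the weight grading, whence $S_{\tau_{is}}(x)=e^{-s\mu}S(x)$ on a weight-$\mu$ element $x$; and since $R=S\circ\tau_{\frac{i}{2}}$ is involutive one has $R^2=S^2\tau_i=\id$, i.e.\ $S^2=\tau_{-i}$, so that $S^2(x)=e^{\mu}x$ on weight-$\mu$ elements. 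Consequently, for homogeneous $b,c$ of weights $\mu_b,\mu_c$ the displayed condition for a given parameter becomes
\[ \tilde{\varphi}(c\ot b)=e^{-s(\mu_b+\mu_c)}\,\tilde{\varphi}\bigl(S(b)\ot S(c)\bigr),\qquad(\star_s) \]
where $S(b),S(c)$ are again homogeneous of weights $\mu_b,\mu_c$.

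The heart of the argument is a double application of $(\star_t)$. Applying $(\star_t)$ to the pair $(S(b),S(c))$ and using $S^2(x)=e^{\mu}x$ gives $\tilde{\varphi}(S(b)\ot S(c))=e^{-t(\mu_b+\mu_c)}\tilde{\varphi}(S^2(c)\ot S^2(b))=e^{(1-t)(\mu_b+\mu_c)}\tilde{\varphi}(c\ot b)$; substituting this back into $(\star_t)$ yields
\[ \tilde{\varphi}(c\ot b)=e^{(1-2t)(\mu_b+\mu_c)}\,\tilde{\varphi}(c\ot b), \]
so that $\bigl(1-e^{(1-2t)(\mu_b+\mu_c)}\bigr)\tilde{\varphi}(c\ot b)=0$. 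Here the hypothesis $t\neq\tfrac12$ enters decisively: $1-2t\neq0$, hence $e^{(1-2t)(\mu_b+\mu_c)}=1$ forces $\mu_b+\mu_c=0$. Thus $\tilde{\varphi}(c\ot b)=0$ whenever $\mu_b+\mu_c\neq0$.

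Finally I would verify $(\star_s)$ for arbitrary $s\in\br$ on each locus. If $\mu_b+\mu_c=0$, the scalar $e^{-s(\mu_b+\mu_c)}$ equals $1$ independently of $s$, so $(\star_s)$ coincides with $(\star_t)$ and holds. If $\mu_b+\mu_c\neq0$, the left-hand side of $(\star_s)$ vanishes by the previous step, while the right-hand side equals $e^{-s(\mu_b+\mu_c)}e^{(1-t)(\mu_b+\mu_c)}\tilde{\varphi}(c\ot b)=0$ as well. Hence $(\star_s)$ holds for all homogeneous $b,c$, and by bilinearity $\eta$ is $\tau_{is}$-real for every $s\in\br$. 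The only genuine obstacle is the weight bookkeeping — in particular pinning down $S^2=\tau_{-i}$ and the invariance of the weight grading under $S$ in the sign conventions of Section \ref{NotPrel} — after which the quantitative use of $t\neq\tfrac12$ is immediate; note also that at the excluded value $t=\tfrac12$ one has $S_{\tau_{i/2}}=R$, whose reality condition indeed imposes no constraint on the weights.
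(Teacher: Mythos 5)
Your proof is correct, but it takes a genuinely different route from the paper's. The paper passes through the generating functional: from the $\tau_{it}$-real cocycle $\eta$ it produces, via Theorem \ref{cocycle->funct}, a functional $L$ satisfying $L\circ S\circ\tau_{it}=L$, iterates this to get $L=L\circ\tau_{2it-i}$, invokes an external result (Corollary 4.9 of \cite{FabioUweAnna}, an invariant-mean/eigenvalue argument on the matrices $L^{\beta}$) to upgrade this to $L=L\circ S\circ\tau_z$ for all $z\in\bc$, and then returns to the cocycle through the one-to-one correspondence of Theorem \ref{mainthm}. You never leave the cocycle level: you encode $\tau_{is}$-reality as a bilinear identity for the form $\tilde{\varphi}$, exploit the weight grading on which the scaling group acts diagonally and which $S$ preserves, and use $S^2=\tau_{-i}$ (which is indeed the right identity in this paper's convention $R=S\circ\tau_{\frac{i}{2}}$, $R^2=\id$) to conclude that for $t\neq\frac12$ the form $\tilde{\varphi}$ must vanish on all pairs of homogeneous elements of nonzero total weight, after which $(\star_s)$ holds for every $s\in\br$ for trivial reasons on each weight locus. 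The quantitative role of $t\neq\frac12$ is exactly the one hidden in the cited corollary, but applied directly to $\tilde{\varphi}$ rather than to $L^{\beta}$; your argument is therefore self-contained and does not need the cocycle--functional correspondence at all, while the paper's route delivers as a byproduct the $S\circ\tau_z$-invariance of the associated generating functional for all complex $z$.
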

\begin{proof}
Assume that the cocycle $\eta$ is as above. Then its associated (via Theorem \ref{cocycle->funct}) generating functional $L$ satisfies the formula
\[L \circ S \circ \tau_{it} = L.\]
Applying this formula twice yields $L= L \circ \tau_{2it - i}$. Then the same argument as in Corollary 4.9 in \cite{FabioUweAnna} (see also the proofs of Lemma 2.9 (2) in \cite{Reiji} and Proposition 3.5 in \cite{FST}) imply that in fact $L= L \circ S \circ \tau_z$ for any $z \in \bc$. Fixing $s \in \br$ and applying Theorem \ref{mainthm} for $\eta$, $L$ and $\alpha= \tau_{is}$ ends the proof (recall again that our convention for the scaling group is that of \cite{DawFimSkaWhi}, not of \cite{wor2}).
\end{proof}

In particular the above corollary reveals that if a cocycle is $\tau_{it}$-real for $t\neq \frac{1}{2}$, then it is real in the sense of Vergnioux and Kyed. Example 11.5 of \cite{FabioUweAnna} shows that the assumption $t \neq \frac{1}{2}$   is indeed necessary.

\section{Applications}
In this section we discuss the applications of the main results of Section \ref{MainresultSection}. Both concern quantum groups: the first is related to decomposing quantum L\'evy processes on a compact quantum group, whereas the second treats a generalization of a theorem from \cite{DawFimSkaWhi} characterising the Haagerup property for a discrete quantum group via the existence of a proper \emph{real} cocycle on its dual.

Throughout this Section \textbf{we fix a compact quantum group $\QG$ and an admissible bijection $\alpha:\PolQG \to \PolQG$}. As explained earlier, for $\Alg=\PolQG$ we can view all representations of $\Alg$ as bounded operator valued, so that we will also speak of Hilbert (rather than pre-Hilbert) space valued cocycles. Given $\QG$ we will write $K_2:=\tu{Lin} \{ab: a, b \in K_1\}$.

\subsection{Extracting maximal Gaussian parts of quantum L\'evy processes}

We need to introduce a few more notations and concepts. A \emph{quantum L\'evy process} on $\QG$ is a counterpart of a classical concept of a L\'evy process on a compact group; we refer for specific definitions to \cite{Schurmann} and here only stress the fact that there is a natural one-to-one correspondence between (stochastic equivalence classes of) quantum L\'evy processes on $\QG$  and generating functionals on $\Pol(\QG)$. A generating functional $L$ (and, by extension, its associated quantum L\'evy process) is called \emph{Gaussian} if $L(a^*a) = 0$ for all $a \in K_2$. This is equivalent to the fact that the cocycle $\eta$ associated to such $L$ via Proposition \ref{funct->cocycle} is a cocycle with respect to the trivial representation of $\PolQG$:
\begin{equation} \label{Gaussiancocycle} \eta(ab) = \Cou(a) \eta(b) + \eta(a) \Cou(b), \;\;\;a,b \in \PolQG. \end{equation}
In fact any map $\eta:\Pol(\QG) \to \Hil$, where $\Hil$ is a Hilbert space, satisfying the equation \eqref{Gaussiancocycle} is called a \emph{Gaussian cocycle}.

Consider now any cocycle $\eta:\Pol(\QG) \to \Hil$ with an associated representation $\pi: \PolQG \to B(\Hil)$. Define $\Ril= \ol{\eta(K_2)}$ and $\Gil=\bigcap_{a\in \PolQG}\, \Ker(\pi(a) - \Cou(a) I_{\Hil})$. Then Chapter 5 of \cite{Schurmann} (see also \cite{SchuProc}) shows that the following facts hold: both subspaces $\Ril$ and $\Gil$ are left invariant by operators in $\pi(\PolQG)$, $\Hil = \Ril \oplus \Gil$ (in the Hilbert space sense -- so $\Ril \perp \Gil$), if we denote by $P_R$ and $P_G$ the respective orthogonal projections then
$\eta_R:= P_R \circ \eta$ is a cocycle on $\PolQG$ and $\eta_G=P_G \circ \eta$ is a Gaussian cocycle on $\PolQG$ (both with respect to the respective restrictions of $\pi$). We say that $\eta_G$ is a \emph{maximal Gaussian part} of $\eta$. If $\Gil=\{0\}$ we say that $\eta$ is \emph{purely non-Gaussian}. In particular it is easy to see that $\eta_R$ is purely non-Gaussian.

\begin{deft}
 We say that a quantum L\'evy process   admits a decomposition into a maximal Gaussian part and a purely non-Gaussian part if its generating functional $L:\PolQG\to \bc$ decomposes as $L=L_G + L_R$, where $L_G, L_R:\PolQG\to \bc$ are generating functionals such that the associated (via Proposition \ref{funct->cocycle}) cocycles are respectively Gaussian and purely non-Gaussian.
\end{deft}

For motivations behind this terminology, relations to the classical L\'evy-Khintchine decomposition and the (non)-uniqueness of a possible decomposition described above we refer to \cite{Schurmann}. Here we only remark that if the answer to Question \ref{mainquest} were always positive (which we know is not true) then the decomposition defined above would always exist. The decomposition exists for all quantum L\'evy processes on classical groups and for  $SU_q(2)$ (\cite{SchSk}) or, more generally $SU_q(N)$ for arbitrary $N \in \bn$ (\cite{FKLS}). For the non-existence examples we refer to the forthcoming work \cite{UweAndreas}; here we will prove, using the results of Section 2, that the decomposition exists in presence of $\alpha$-symmetry.

We begin with two lemmas, in which we use several times the notion of an adjoint for a densely defined \emph{conjugate} linear Hilbert space operator.

\begin{lem}
Let $\eta:\PolQG\to \Hil$ be an $\alpha$-real cocycle and let $D=\eta(\PolQG)$. Then the formulas
\[
T(\eta(a)) = \eta\big(S_{\alpha}(a)^*\big), \;\;\; T'(\eta(a)) = \eta\big(S_{\alpha}(a^*)\big), \;\;\; a \in \PolQG,
\]
define conjugate linear involutive maps $T,T':\eta(A)\to \Hil$. The operators $T$ and $T'$ are mutually adjoint on $D$ and therefore closable as densely defined operators on $\Hil$. Furthermore each of the maps $T, T'$ leaves $\eta(K_2)$ invariant and the corresponding restrictions $T_R:=T|_{\eta(K_2)}$, $T'_R = T'|_{\eta(K_2)}$ are densely defined conjugate linear closable operators on the Hilbert space $\Ril:=\ol{\eta(K_2)}$.
\end{lem}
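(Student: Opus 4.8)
The plan is to distill from the $\alpha$-reality condition \eqref{defrealcocycleformula} a single bilinear identity that simultaneously forces $T$ and $T'$ to be well defined and makes them mutually adjoint. Concretely, I would first establish that for all $a,b\in\PolQG$
\[ \la \eta(S_{\alpha}(a)^*), \eta(b) \ra = \la \eta(S_{\alpha}(b^*)), \eta(a) \ra, \qquad a,b\in\PolQG,\]
an identity I will refer to as $(\star)$. To prove $(\star)$ I apply \eqref{defrealcocycleformula} to the pair $(S_{\alpha}(b^*),a)$, obtaining $\la \eta(S_{\alpha}(b^*)),\eta(a)\ra = \la \eta(S_{\alpha}(a)^*), \eta(S_{\alpha}(S_{\alpha}(b^*)^*))\ra$, and then simplify the last entry using $S_{\alpha}\circ{}^*\circ S_{\alpha}\circ{}^*=\id$ from Proposition \ref{propadmissbij}(v), which collapses $S_{\alpha}(S_{\alpha}(b^*)^*)$ to $b$. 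This small computation is the heart of the matter, and the careful bookkeeping with $S_\alpha$ and ${}^*$ is the only real (if shallow) obstacle; everything else is formal.

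Granting $(\star)$, well-definedness of $T$ follows immediately: if $\eta(c)=0$ then the right-hand side of $(\star)$ vanishes for every $b$, so $\la \eta(S_{\alpha}(c)^*),\eta(b)\ra=0$ for all $b$, and density of $\eta(\PolQG)$ in $\Hil$ forces $\eta(S_{\alpha}(c)^*)=0$. Since $a\mapsto S_\alpha(a)^*$ is conjugate linear, $T$ is a well-defined conjugate linear map. Running the same computation on the pair $(S_\alpha(a^*),b)$ produces the companion identity $\la \eta(S_\alpha(a^*)),\eta(b)\ra = \la \eta(S_\alpha(b)^*),\eta(a)\ra$, which analogously gives well-definedness and conjugate linearity of $T'$. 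Finally, reading $(\star)$ as $\la T\eta(a),\eta(b)\ra = \la T'\eta(b),\eta(a)\ra$ is exactly the statement that $T$ and $T'$ are mutually adjoint on $D$.

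For involutivity I would observe that $T$ is induced by $u:={}^*\circ S_\alpha$ and $T'$ by $v:=S_\alpha\circ{}^*$ (that is, $T\eta(a)=\eta(u(a))$ and $T'\eta(a)=\eta(v(a))$), and that Proposition \ref{propadmissbij}(v) gives $v\circ v=\id$ directly, while $u\circ u=\id$ follows after conjugating that same relation by ${}^*$; hence $T^2=T'^2=\id$ on $D$. Closability is then automatic from the standard criterion that a densely defined (conjugate linear) operator is closable exactly when its adjoint is densely defined: $(\star)$ shows $T'\subseteq T^*$ and $T\subseteq T'^*$, so each adjoint is defined on the dense domain $D$.

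It remains to treat the invariance of $\eta(K_2)$ and the restrictions, which I would handle purely algebraically. Since $K_1=\Ker\Cou$ is a $^*$-ideal and, by Proposition \ref{propadmissbij}(iii) together with $\Cou(x^*)=\overline{\Cou(x)}$, both maps $a\mapsto S_\alpha(a)^*$ and $a\mapsto S_\alpha(a^*)$ send $K_1$ into $K_1$; as $S_\alpha$ is an anti-homomorphism (Proposition \ref{propadmissbij}(v)), applying either map to a product $ab$ with $a,b\in K_1$ again yields a product of two elements of $K_1$, so both maps carry $K_2$ into $K_2$. Consequently $T$ and $T'$ leave $\eta(K_2)$ invariant, and $T_R,T'_R$ are densely defined conjugate linear operators on $\Ril=\ol{\eta(K_2)}$ with images inside $\eta(K_2)\subseteq\Ril$. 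Restricting $(\star)$ to $\xi,\zeta\in\eta(K_2)$ shows that $T_R$ and $T'_R$ are mutually adjoint as operators on $\Ril$, and the same closability criterion gives closability of each on $\Ril$.
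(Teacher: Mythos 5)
Your proof is correct, and it takes a mildly but genuinely different route from the paper's at the one non-formal step, namely the well-definedness of $T$ and $T'$. The paper establishes this by passing through the associated generating functional: it invokes Theorem \ref{cocycle->funct} to produce the $S_{\alpha}$-invariant $L$ with $L(ba)=\la\eta(b^*),\eta(a)\ra$ for $a,b\in K_1$, and then uses hermitianity and $S_{\alpha}$-invariance of $L$ together with non-degeneracy of $\eta$ to conclude that $\eta(a)=0$ forces $\eta(S_{\alpha}(a)^*)=0$. You instead extract the single identity $(\star)$, $\la \eta(S_{\alpha}(a)^*),\eta(b)\ra=\la\eta(S_{\alpha}(b^*)),\eta(a)\ra$, directly from the $\alpha$-reality condition \eqref{defrealcocycleformula} via the substitution $a\mapsto S_{\alpha}(b^*)$ and the relation $S_{\alpha}\circ{}^*\circ S_{\alpha}\circ{}^*=\id$; this one formula then delivers well-definedness (by density of $D$), mutual adjointness, and closability in one stroke, without appealing to Theorem \ref{cocycle->funct} at all. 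Since $\alpha$-reality of $\eta$ and $S_{\alpha}$-invariance of $L$ carry the same information, the two arguments are morally equivalent, but yours is lighter on prerequisites and more self-contained, whereas the paper's fits the lemma into the functional-cocycle dictionary it has already built. The remaining points --- involutivity from Proposition \ref{propadmissbij}(v), closability from dense definedness of the adjoint, and invariance of $\eta(K_2)$ because both ${}^*$ and the $\Cou$-preserving anti-homomorphism $S_{\alpha}$ preserve $K_1$ and hence $K_2$ --- coincide with the paper's treatment.
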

\begin{proof}
To verify that $T$ is well defined we need to check that $\eta(a)=0$ implies $T(\eta(a))=\eta\big(S_{\alpha}(a)^*\big)=0$ for any $a \in \PolQG$. It is in fact sufficient to do this for $a\in K_1$ (as $S_{\alpha}$ is unital and $\Cou$-preserving and $\eta(1)=0$). Let $L$ be the unique $S_{\alpha}$-invariant generating functional associated to $\eta$ via Theorem \ref{cocycle->funct}. Then the condition $\eta(a)=0$ implies that  $L(ba) = \la \eta(b^*), \eta(a) \ra =  0$ for all
 $b\in K_1$.  Since $L$ is $S_{\alpha}$-invariant,
\[0 = \overline{L\circ S_\alpha(ba)} = L\big( S_\alpha(b)^*S_\alpha(a)^*\big) = \la \eta(S_\alpha(b)), \eta(S_\alpha(a)^*) \ra
\]
for all $b\in K_1$, which by non-degeneracy of $\eta$ and the fact that $S_{\alpha}$ is bijective is equivalent to $\eta\big(S_\alpha(a)^*\big)=0$. The proof for $T'$ is identical. The fact that both $T$ and $T'$ are involutive follows from Proposition \ref{propadmissbij} (v).

Further since $\eta:A\to H$ is $\alpha$-real
\[
\langle T(\eta(a)),T'(\eta(b))\rangle = \big\langle\eta\big(S_\alpha(a)^*\big),\eta\big(S_\alpha(b^*)\big)\big\rangle
= \langle \eta(b),\eta(a)\rangle
\]
for $a,b\in \PolQG$. This means that $T'=T^*|_D$, $T=(T')^*|_D$ and implies closability of both operators.

%The fact that $T$ and $T'$ are involutive follows from Proposition \ref{propadmissbij} (v).

Finally since  $S_{\alpha}$ as an $\Cou$-preserving anti-homomorphism  leaves $K_2$ invariant, $T$ and $T'$ map $\eta(K_2)$ to itself and the last statement follows easily.
\end{proof}

In view of the above we will write in what follows $T^{\dagger}$ instead of $T'$.

\begin{lem}\label{lem-T-P}
Let $\eta:\PolQG\to \Hil$ be an $\alpha$-real cocycle. Then each of the cocycles
 $\eta_R = P_R\circ \eta$  and $ \eta_G = P_G\circ \eta$ is also $\alpha$-real.
\end{lem}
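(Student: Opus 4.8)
The plan is to encode the $\alpha$-reality of $\eta$ in the conjugate linear operators $T,T^{\dagger}$ produced in the previous lemma and to show that $P_R,P_G$ commute with their closures; the reality of $\eta_R$ and $\eta_G$ then drops out of a short computation. Recall that $T\eta(a)=\eta(S_{\alpha}(a)^*)$ and $T^{\dagger}\eta(a)=\eta(S_{\alpha}(a^*))$ on $D=\eta(\PolQG)$, that both are involutive, and that they are mutually adjoint, $\la T\xi,\zeta\ra=\la T^{\dagger}\zeta,\xi\ra$ for $\xi,\zeta\in D$ — an identity which passes to the closures by approximation and is exactly a restatement of the $\alpha$-reality of $\eta$. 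Granting that $P_R$ (equivalently $P_G=I-P_R$) commutes with $\overline{T}$ and $\overline{T^{\dagger}}$, i.e.\ preserves their domains and intertwines, I would finish as follows: for $P\in\{P_R,P_G\}$ and $a,b\in\PolQG$ one has $P\eta(S_{\alpha}(b)^*)=P\,\overline{T}\eta(b)=\overline{T}(P\eta(b))$ and likewise $P\eta(S_{\alpha}(a^*))=\overline{T^{\dagger}}(P\eta(a))$, so that, writing $\eta_P:=P\circ\eta$,
\[ \la\eta_P(S_{\alpha}(b)^*),\eta_P(S_{\alpha}(a^*))\ra=\la\overline{T}(\eta_P(b)),\overline{T^{\dagger}}(\eta_P(a))\ra=\la(\overline{T^{\dagger}})^2\eta_P(a),\eta_P(b)\ra=\la\eta_P(a),\eta_P(b)\ra, \]
using mutual adjointness and $(\overline{T^{\dagger}})^2=\tu{id}$ on $PD$ (the latter because $T^{\dagger}$ is involutive on $D$ and commutes with $P$, together with Proposition \ref{propadmissbij}(v)). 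Taking $P=P_R$ and $P=P_G$ gives the $\alpha$-reality of $\eta_R$ and of $\eta_G$. As a shortcut, since $\Ril\perp\Gil$ the $\alpha$-reality identity for $\eta$ splits as the sum of its $R$- and $G$-parts, so it would in fact suffice to treat one of the two pieces and read off the other.

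The heart of the argument is thus the commutation of $P_R,P_G$ with $\overline{T},\overline{T^{\dagger}}$. First I would record the intertwining relation: a direct computation using the cocycle identity \eqref{cocyclerelation} and the conjugate linearity of $T$ gives $T\pi(a)=\pi(S_{\alpha}(a)^*)T$ on $D$, and similarly $T^{\dagger}\pi(a)=\pi(S_{\alpha}(a^*))T^{\dagger}$; these extend to $\overline{T},\overline{T^{\dagger}}$ and show in particular that $\tu{dom}(\overline{T})$ is $\pi(\PolQG)$-invariant. Since $\Gil=\bigcap_a\Ker(\pi(a)-\Cou(a)I_{\Hil})$ is the joint $\Cou$-eigenspace and the map $a\mapsto S_{\alpha}(a)^*$ carries this eigenvalue equation to itself (here $\Cou(S_{\alpha}(a)^*)=\overline{\Cou(a)}$ exactly compensates the conjugate linearity of $\overline{T}$), the intertwining relation forces $\overline{T}$ and $\overline{T^{\dagger}}$ to map $\Gil\cap\tu{dom}$ into $\Gil$. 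On the other hand the previous lemma already supplies that $T,T^{\dagger}$ preserve $\eta(K_2)$, a core for the $\Ril$-part, so $\Ril$ is invariant as well. Feeding both invariances into the standard reducing-subspace criterion for the closed operator $\overline{T}$ and its adjoint $\overline{T^{\dagger}}$ then yields that $\Ril$ and $\Gil$ reduce both operators, which is the desired commutation.

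The main obstacle is genuinely analytic and is precisely what separates the present non-Kac setting from the Kac/real case: the antipode, hence $T$ and $T^{\dagger}$, is unbounded, so the reducing-subspace conclusion cannot be read off naively and the matching of $\tu{dom}(\overline{T})$ with the decomposition $\Hil=\Ril\oplus\Gil$ requires care. I would resolve this using the representation-theoretic structure of $\PolQG$: for $c\in K_1$ one has $\pi(c)\eta(a)=\eta\big(c(a-\Cou(a)1)\big)\in\eta(K_2)$ and $\pi(c)|_{\Gil}=0$, so $\pi(K_1)$ annihilates $\Gil$ and has range dense in $\Ril$; combined with the intertwining $\overline{T}\pi(c)=\pi(S_{\alpha}(c)^*)\overline{T}$ (note $S_{\alpha}(c)^*\in K_1$) and the isotypic decomposition of $\pi$, in which $\Gil$ is the trivial-isotypic component and $P_G$ the associated central projection, one sees that $\overline{T}$ spatially implements the automorphism $a\mapsto S_{\alpha}(a)^*$, which fixes $\Gil$ setwise; this is exactly what makes $P_G$ (and $P_R$) commute with $\overline{T}$ and $\overline{T^{\dagger}}$. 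Once the commutation is in place, the computation of the first paragraph completes the proof for $\eta_R$ and, verbatim, for $\eta_G$.
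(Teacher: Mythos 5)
Your overall skeleton is the same as the paper's -- encode $\alpha$-reality in the mutually adjoint conjugate-linear operators $T,T^{\dagger}$, get the projections to pass through them, and finish with the three-line inner-product computation (which is correct and essentially identical to the paper's, as is the closing remark that $\eta_G$ follows from $\eta_R$ by orthogonality). But the step you yourself identify as the heart of the argument is not actually established. Knowing that $T$ and $T^{\dagger}$ map $\eta(K_2)$ (a dense subspace of $\Ril$) into itself, and that $\ol{T}$ maps $\Gil\cap\tu{Dom}(\ol{T})$ into $\Gil$, does \emph{not} feed into any ``standard reducing-subspace criterion'': for a closed unbounded operator $A$, reduction by a subspace $M$ with projection $P$ requires $P\,\tu{Dom}(A)\subset\tu{Dom}(A)$, and invariance of $M\cap\tu{Dom}(A)$ and $M^{\perp}\cap\tu{Dom}(A)$ gives no control whatsoever on whether $P$ preserves the domain (these intersections could even fail to be dense in $M$, resp.\ $M^{\perp}$). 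Your proposed repair -- that the intertwining $\ol{T}\pi(c)\subset\pi(S_\alpha(c)^*)\ol{T}$ plus the isotypic decomposition shows $\ol{T}$ ``spatially implements'' the map $a\mapsto S_\alpha(a)^*$ and hence commutes with the central projection $P_G$ -- is not an argument: an unbounded operator does not spatially implement anything in a sense that forces commutation with projections in the commutant, and nothing in that paragraph produces the missing inclusion $P_R D\subset\tu{Dom}(\ol{T})$ (or $P_GD\subset\tu{Dom}(\ol{T})$). So the commutation you rely on in the first paragraph is unproved, and with it $(\ol{T^{\dagger}})^2=\id$ on $PD$.

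The fix is both more elementary and more modest than full reduction, and it is what the paper does: you never need $P_R$ to commute with $\ol{T}$ on $\Hil$, only the one-sided relations on $D=\eta(\PolQG)$ for the \emph{restrictions} $T_R=T|_{\eta(K_2)}$, $T_R^{\dagger}=T^{\dagger}|_{\eta(K_2)}$, viewed as densely defined operators on the Hilbert space $\Ril$. For $\zeta\in D$ and $\xi\in\eta(K_2)$ one has
\[ \la P_R\zeta, T_R^{\dagger}\xi\ra=\la\zeta,T^{\dagger}\xi\ra=\la\xi,T\zeta\ra=\la P_R\xi,T\zeta\ra=\la\xi,P_RT\zeta\ra, \]
where the first and third equalities use precisely that $T^{\dagger}\xi$ and $\xi$ lie in $\eta(K_2)\subset\Ril$; this shows $P_R\zeta\in\tu{Dom}\big((T_R^{\dagger})^*\big)=\tu{Dom}(\ol{T_R})$ and $\ol{T_R}P_R|_D=P_RT$, and symmetrically $T_R^*P_R|_D=P_RT^{\dagger}$. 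A second computation of the same kind gives $(T_R^*)^2P_R|_D=P_R|_D$, after which your final display goes through verbatim with $\ol{T_R},T_R^*$ in place of $\ol{T},\ol{T^{\dagger}}$. In short: your target statement (genuine reduction of $\ol{T}$ by $\Ril\oplus\Gil$) is stronger than needed and not proved by the means you invoke; the adjoint-domain computation above, exploiting only that $T^{\dagger}$ preserves $\eta(K_2)$, is what actually closes the argument.
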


\begin{proof}
We use the same notation as in the last lemma. Begin by denoting the closure of $T$ by $\ol{T}$, so that we have $\ol{T}=(T^{\dagger})^*$, and similarly let $\ol{T_R}$ denote the closure of $T_R$, so that $\ol{T_R}=(T^{\dagger}_R)^*$.  Note that we have of course $\ol{T_R}^* = T_R^*$ and as $(T_R^{\dagger})^2= \id_{\eta(K_2)}$, we also have $(T_R^*)^2= \id_{\tu{Dom} T_R^*}$.

Take $\zeta\in D:=\eta(\PolQG)$ and $\xi \in \eta(K_2)$ and compute
\[ \la P_R \zeta, T_R^{\dagger} \xi \ra = \la \zeta,  T_R^{\dagger} \xi \ra = \la \zeta, T^{\dagger} \xi \ra = \la  \xi, T\zeta \ra = \la  P_R \xi, T \zeta \ra =
\la  \xi, P_R T \zeta \ra,\]
which proves  $P_R (D) \subset \tu{Dom} (\ol{T_R})$, $\ol{T_R}P_R|_D= P_R T$, with the latter equality understood as one for linear operators acting between $D$ and $\Ril$. Similarly we show $P_R (D) \subset \tu{Dom} (T_R^*)$ and $T_R^*P_R|_D = P_R T^{\dagger}$. We will need another of these relations, namely $T_R^* (P_R(D))\subset \tu{Dom} (T_R^*)$ and $(T_R^*)^2 P_R|_D = P_R|_D$. To establish it we pick again $\zeta \in D$ and $\xi \in \tu{Dom} (T_R)=\eta(K_2)$ and write
\[ \la  T_R^* P_R \zeta, T_R \xi \ra = \la  P_R T^{\dagger} \zeta,  T_R \xi \ra = \la   T^{\dagger} \zeta,  P_R T_R \xi \ra = \la   T^{\dagger} \zeta,   T \xi \ra =
\la  T^2 \xi, \zeta \ra = \la \xi, \zeta \ra, \]
 which shows the desired equalities.

We are now ready for the algebraic computation. For all $a, b \in \PolQG$
\begin{align*}
\big\langle\eta_R\big(S_\alpha(a)^*\big),&\eta_R\big(S_\alpha(b^*)\big)\big\rangle = \big\langle P_R\eta\big(S_\alpha(a)^*\big),P_R\eta\big(S_\alpha(b^*)\big)\big\rangle = \langle P_RT\eta(a),P_RT^{\dagger}\eta(b)\rangle
\\&= \langle \ol{T_R}P_R(\eta(a)),T_R^*P_R(\eta(b))\rangle
= \langle (T_R^*)^2 P_R(\eta(b),) P_R\eta(a)\rangle = \langle \eta_R(b),\eta_R(a)\rangle,
\end{align*}
which proves that $\eta_R$ is $\alpha$-real. The proof for the Gaussian part $\eta_G$ follows now from the (orthogonal) decomposition $\eta= \eta_R + \eta_G$.
\end{proof}

We now formulate the main theorem of this subsection.

\begin{tw}
Let $\QG$ be a compact quantum group, let $\alpha:\Pol(\QG) \to \Pol(\QG)$ be an admissible bijection. Then every quantum L\'evy process on $\QG$ whose generating functional is   $S_{\alpha}$-invariant allows the decomposition into a maximal Gaussian part and a purely non-Gaussian part.
\end{tw}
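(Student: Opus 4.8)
The plan is to transport the orthogonal splitting $\Hil = \Ril \oplus \Gil$ that is available at the level of cocycles back to the functional $L$ by means of formula \eqref{definingformula}. First I would pass from the $S_\alpha$-invariant generating functional $L$ of the given quantum L\'evy process to a cocycle: by Proposition \ref{funct->cocycle} there is a cocycle $\eta:\PolQG \to \Hil$ yielding the coboundary of $L$, and by Lemma \ref{Lemmacocyclereal} (since $L$ is $S_\alpha$-invariant) this $\eta$ is $\alpha$-real. I then form the canonical decomposition $\eta = \eta_R + \eta_G$, with $\eta_R = P_R\circ \eta$ purely non-Gaussian and $\eta_G = P_G \circ \eta$ Gaussian, as recalled before Lemma \ref{lem-T-P}; the essential input here is Lemma \ref{lem-T-P}, which guarantees that both $\eta_R$ and $\eta_G$ are again $\alpha$-real.

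Because $\eta_R$ and $\eta_G$ are $\alpha$-real cocycles, Theorem \ref{cocycle->funct} yields, through \eqref{definingformula}, two $S_\alpha$-invariant generating functionals $L_R$ and $L_G$ whose coboundaries are yielded by $\eta_R$ and $\eta_G$ respectively. The key identity to verify is $L = L_R + L_G$. For this I would evaluate $L(\gamma(a))$ via \eqref{definingformula}, substitute $\eta = \eta_R + \eta_G$ into the inner product $\la \eta(S_\alpha(a_{(1)})^*), \eta(\alpha(a_{(2)}))\ra$, and use $\Ril \perp \Gil$ to discard the two mixed terms $\la \eta_R(\cdot), \eta_G(\cdot)\ra$ and $\la \eta_G(\cdot), \eta_R(\cdot)\ra$. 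What survives is precisely $-\la \eta_R(S_\alpha(a_{(1)})^*), \eta_R(\alpha(a_{(2)}))\ra - \la \eta_G(S_\alpha(a_{(1)})^*), \eta_G(\alpha(a_{(2)}))\ra = L_R(\gamma(a)) + L_G(\gamma(a))$. Since $\gamma = \id + \alpha$ is a bijection by admissibility, this establishes $L = L_R + L_G$ on all of $\PolQG$.

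Finally I would match this with the definition of the decomposition, which refers to the cocycles attached to $L_R$ and $L_G$ via Proposition \ref{funct->cocycle}. These cocycles yield the same coboundaries as $\eta_R$ and $\eta_G$ and are therefore unitarily equivalent to them; since a unitary equivalence intertwines the underlying representations (using non-degeneracy and the fact that representations of the CQG algebra $\PolQG$ are bounded), it preserves both the Gaussian relation \eqref{Gaussiancocycle} and the condition $\Gil = \{0\}$ defining pure non-Gaussianity. Hence the cocycle of $L_G$ is Gaussian and that of $L_R$ is purely non-Gaussian, which is exactly the required decomposition.

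I do not expect a genuine obstacle here: the analytic heart of the matter, namely that the orthogonal projections $P_R$ and $P_G$ send an $\alpha$-real cocycle to $\alpha$-real cocycles, has already been isolated in Lemma \ref{lem-T-P}. Everything else reduces to the orthogonality of the decomposition and the invertibility of $\gamma$, so the proof is essentially an assembly of the preceding results; the only point requiring a little care is checking that the Gaussian and purely non-Gaussian properties are genuinely unitary invariants, as used in the last paragraph.
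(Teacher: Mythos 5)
Your proposal is correct and follows essentially the same route as the paper: pass from $L$ to an $\alpha$-real cocycle $\eta$, split $\eta=\eta_R+\eta_G$, invoke Lemma \ref{lem-T-P} and Theorem \ref{cocycle->funct} to obtain $L_R$ and $L_G$, and deduce $L=L_R+L_G$ from formula \eqref{definingformula} together with $\Ril\perp\Gil$ and the bijectivity of $\gamma$. The only difference is that you spell out the (correct, and left implicit in the paper) final check that unitary equivalence of cocycles preserves Gaussianity and pure non-Gaussianity.
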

\begin{proof}
Let $L:\PolQG \to \bc$ be an $S_{\alpha}$-invariant  generating functional with an $\alpha$-real cocycle $\eta$ associated to it by Proposition \ref{funct->cocycle}.  Lemma \ref{lem-T-P} and Theorem \ref{cocycle->funct} imply that there exist generating functionals $L_G,L_R:\PolQG\to\mathbb{C}$ for which $\eta_G$ and $\eta_R$ yield the respective coboundaries. Since the ranges of $\eta_G$ and $\eta_R$ are mutually orthogonal, we have
\begin{align*}
L(\gamma(a)) &= -\left\langle\eta\left(S_{\alpha}(a_{(1)})^*\right),\eta(\alpha(a_{(2)}))\right\rangle
\\&=  -\left\langle\eta_G\left(S_{\alpha}(a_{(1)})^*\right),\eta_G(\alpha(a_{(2)}))\right\rangle - \left\langle\eta_R\left(S_{\alpha}(a_{(1)})^*\right),\eta_R(\alpha(a_{(2)}))\right\rangle
 \\
&= L_G(\gamma(a)) + L_R(\gamma(a))
\end{align*}
for all $a\in \PolQG$.

\end{proof}

\subsection{Haagerup property for quantum groups via arbitrary proper cocycles}

The concept of the \emph{Haagerup property} for locally compact quantum groups was developed in \cite{DawFimSkaWhi} (we refer to that paper for motivations and history behind this notion). It was shown there  that a discrete quantum group  $\hat{\QG}$ has Haagerup property if its dual compact quantum group admits a \emph{proper real} cocycle. We will show below that  the same fact is true if we delete the adjective `real'. We first introduce some definitions.

  Let $\QGam$ be a discrete quantum group with the compact quantum group dual $\QG$. Recall the notations related to irreducible representations of $\QG$ introduced in Section 1. Each functional $\omega: \PolQG \to \bc$ can be identified with a family of matrices
$(\omega^{\beta})_{\beta \in \Irr_\QG}$, with $\omega^\beta \in M_{n_{\beta}}$ defined as $\omega^\beta = (\id_{M_{n_{\beta}}} \ot \omega)(U^{\beta})$; all of these matrices are self-adjoint if and only if $\omega$ is $S$-invariant. Similarly each cocycle $\eta: \PolQG \to \Hil$ can be viewed as a family of Hilbert space valued matrices
$(\eta^{\beta})_{\beta \in \Irr_\QG}$. Note that we can view each $\eta^{\beta}$ as an operator in $B(\bc^{n_{\beta}};\bc^{n_{\beta}} \ot \Hil)$. In particular we can consider matrices $(\eta^{\beta})^* \eta^{\beta} \in B(\bc^{n_{\beta}};\bc^{n_{\beta}})\approx M_{n_{\beta}}$. The following two definitions were introduced in \cite{DawFimSkaWhi} (Definition 7.16 and 7.20).

\begin{deft}
A cocycle $\eta: \PolQG \to \Hil$ is called proper if for any $M >0$ there exists a finite set $F \subset \Irr_\QG$ such that for any $\beta \in \Irr_{\QG} \setminus F$ we have $(\eta^{\beta})^* \eta^{\beta} \geq M I_{\bc^{n_{\beta}}}$. Further an $S$-invariant generating functional $L: \PolQG \to \Hil$ is called proper if
for any $M >0$ there exists a finite set $F \subset \Irr_\QG$ such that for any $\beta \in \Irr_{\QG} \setminus F$ we have $L^{\beta} \leq - M I_{\bc^{n_{\beta}}}$.
\end{deft}

A warning is in place: an opposite sign in the inequality for $L$ in comparison to the one appearing in Definition 7.16 of \cite{DawFimSkaWhi} is due to the fact that in that paper a different convention, motivated by  classical geometric group theory, was used for generating functionals -- there the authors worked with counterparts of conditionally \emph{negative} definite functions, whereas here we use the quantum stochastics community convention and treat conditionally \emph{positive} definite objects.

The following  result arises as a combination of Theorem \ref{cocycle->funct} and methods introduced in \cite{DawFimSkaWhi} and \cite{FabioUweAnna}.

\begin{tw}
A discrete quantum group  $\QGam$ has the Haagerup property if and only if $\Pol(\QG)$ (where $\QG$ is the compact quantum group dual of $\QGam$) admits a proper cocycle.
\end{tw}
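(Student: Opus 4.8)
The plan is to prove the two implications separately, the forward one being immediate and the converse carrying all the content. For the ``only if'' direction I would simply invoke Theorem 7.23 of \cite{DawFimSkaWhi}: if $\QGam$ has the Haagerup property, then $\PolQG$ carries a proper \emph{real} cocycle, and since a real cocycle is in particular a cocycle while the properness condition $(\eta^\beta)^*\eta^\beta\ge MI$ makes no reference to reality, this already exhibits the required proper cocycle.

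The substance is the ``if'' direction. Starting from an arbitrary proper cocycle $\eta:\PolQG\to\Hil$ for a representation $\pi$, I must symmetrise it into data to which Theorem \ref{cocycle->funct} applies, and the key idea is to symmetrise with respect to the \emph{unitary} antipode $R=S\circ\tau_{\frac{i}{2}}$, which, being $^*$-preserving, is exactly what makes the construction close up in the non-Kac case. Concretely, I would pass to the conjugate Hilbert space $\ol{\Hil}$ and set $\eta'(a)=\ol{\eta(R(a^*))}$; one checks that $\eta'$ is a cocycle for the $^*$-representation $\pi'(a)\ol{v}=\ol{\pi(R(a)^*)v}$. A direct computation using the involutivity of $R$ then shows that the doubled cocycle $\zeta:=\eta\oplus\eta':\PolQG\to\Hil\oplus\ol{\Hil}$ is $\tau_{\frac{i}{2}}$-real in the sense of Definition \ref{defrealcocycle} (with $\alpha=\tau_{\frac{i}{2}}$, so that $S_\alpha=R$); Proposition \ref{tauadmissible} guarantees $\tau_{\frac{i}{2}}$ is admissible. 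After restricting the target to the closed $\pi\oplus\pi'$-invariant subspace $\ol{\zeta(\PolQG)}$ to enforce nondegeneracy, properness is inherited, indeed reinforced, because $(\zeta^\beta)^*\zeta^\beta=(\eta^\beta)^*\eta^\beta+((\eta')^\beta)^*(\eta')^\beta\ge(\eta^\beta)^*\eta^\beta$ for every $\beta\in\Irr_{\QG}$, so the lower bounds witnessing properness of $\eta$ pass verbatim to $\zeta$.

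With the proper $\tau_{\frac{i}{2}}$-real cocycle $\zeta$ in hand, I would apply Theorem \ref{cocycle->funct} with $\alpha=\tau_{\frac{i}{2}}$ to produce a generating functional $L$, invariant under $S_\alpha=R$, whose coboundary $\zeta$ yields. Substituting the unitarity relation $\sum_k (u^\beta_{ki})^*u^\beta_{kj}=\delta_{ij}1$ into the coboundary identity \eqref{formula:yieldscoboundary} and summing over $k$ gives the exact relation $(\zeta^\beta)^*\zeta^\beta=-(L^\beta+(L^\beta)^*)$ for every $\beta$, so properness of $\zeta$ becomes $L^\beta+(L^\beta)^*\le -MI$ off a finite set. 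The convolution semigroup of states generated by $L$ satisfies $\phi_t^\beta=\exp(tL^\beta)$, and the differential inequality coming from this bound on the symmetric part yields $\|\phi_t^\beta\|\le e^{-Mt/2}$, so each $\phi_t$ lies in $c_0(\QGam)$; as the $\phi_t$ are states their induced multipliers are unital completely positive, they converge to the identity as $t\to 0^+$, and their self-adjointness on $L^2(\QGam)$ is ensured by the $R$-invariance of $L$. This is precisely the net required by the Haagerup property, and the methods of \cite{DawFimSkaWhi} and \cite{FabioUweAnna} package it into the conclusion that $\QGam$ has the Haagerup property.

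The main obstacle I anticipate is the symmetrisation step, and specifically getting the symmetry \emph{type} right. Reflecting $\eta$ through the ordinary antipode $S$ fails to produce a cocycle in the non-Kac case precisely because $S$ does not commute with the involution, forcing the use of $R$ and hence landing on $\tau_{\frac{i}{2}}$-reality rather than the reality of Vergnioux and Kyed; as the corollary following Theorem \ref{mainthm} shows, these two notions genuinely differ. This is exactly why the extension of the cocycle/functional correspondence to a general admissible $\alpha$, and not merely $\alpha=\id$, is indispensable for this application. The second, more routine point to verify carefully is that the unitary-antipode invariance of $L$ indeed forces the self-adjointness of the resulting multipliers with respect to the $Q$-weighted inner product on $L^2(\QGam)$, which is the correct symmetry in the non-Kac setting.
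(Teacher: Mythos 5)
Your argument is correct, and its first half is the paper's: both proofs start from Theorem 7.23 of \cite{DawFimSkaWhi} for the forward direction, and for the converse both symmetrise the given proper cocycle via the conjugate cocycle $\ol{\eta}(a)=\overline{\eta(R(a^*))}$ on $\ol{\Hil}$, note that $\zeta=\eta\oplus\ol{\eta}$ is a proper $\tau_{\frac{i}{2}}$-real cocycle, and feed it into Theorem \ref{cocycle->funct} with $\alpha=\tau_{\frac{i}{2}}$ to obtain an $R$-invariant generating functional $L$. You diverge at the final step. The paper averages $L$ over the scaling group with an invariant mean, shows that on the matrix level this is compression by the spectral projections of $Q^{\beta}$, and thereby manufactures a proper \emph{$S$-invariant} generating functional to which Theorem 7.18 of \cite{DawFimSkaWhi} applies verbatim. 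You instead derive from unitarity and the coboundary identity the relation $(\zeta^{\beta})^*\zeta^{\beta}=-\bigl(L^{\beta}+(L^{\beta})^*\bigr)$ --- which, it is worth noting, uses only hermitianity of $L$ and not its $R$-invariance --- and then estimate the convolution semigroup directly: $L^{\beta}+(L^{\beta})^*\le -MI$ gives $\|e^{tL^{\beta}}\|\le e^{-Mt/2}$ by the standard Gronwall/Lumer--Phillips bound, so the states $\phi_t=\exp_\star(tL)$ have Fourier transforms in $c_0(\hQG)$ and tend to the counit. This is a legitimate and somewhat more elementary route (no invariant mean), but it buys this at the cost of having to open up Theorem 7.18 of \cite{DawFimSkaWhi} one level, down to the characterisation of the Haagerup property by a net of states with $c_0$ multipliers converging to $1$; your closing appeal to ``the methods of \cite{DawFimSkaWhi} and \cite{FabioUweAnna}'' should be pinned to that precise statement, including the extension of each $\phi_t$ to a state on the universal $C^*$-completion (as in \cite{LS}). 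One small correction: self-adjointness of the resulting multipliers on $L^2(\QGam)$ is not required for the Haagerup property --- it is relevant for symmetric Markov semigroups as in \cite{FabioUweAnna} --- so the $R$-invariance of $L$ plays no role in your version beyond guaranteeing, via Theorem \ref{cocycle->funct}, that $L$ exists at all.
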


\begin{proof}
Theorem 7.23 of \cite{DawFimSkaWhi} says that $\QGam$ has the Haagerup property if and only if $\PolQG$ admits a proper real cocycle. Assume then that $\eta:\PolQG \to \Hil$ is a proper cocycle (not necessarily real). In the first step we use the procedure described in Theorem 5.4 of \cite{FabioUweAnna} to symmetrize $\eta$. To that end we consider the opposite representation of $\PolQG$ on the conjugate Hilbert space $\bar{\Hil}$ and the cocycle $\ol{\eta}:\PolQG \to \bar{\Hil}$ given by the formula
$\ol{\eta} (a) = \iota \circ \eta(R(a^*))$ ($a \in \PolQG$) where $\iota: \Hil \to \ol{\Hil}$ is the canonical isomorphism. Recall that the opposite representation is defined by the formula $\pi^{\tu{op}}(a)(\iota(v))= \iota ((\pi\ot R)(a^*)v)$ for $a \in \PolQG$, $v \in \Hil$.

The proof of Theorem 5.4 in \cite{FabioUweAnna} shows that the cocycle $(\eta + \bar{\eta}): \PolQG \to \Hil \oplus \bar{\Hil}$ is \emph{KMS real} (i.e.\ $\tau_{\frac{i}{2}}$-real in the terminology used in our article). It is easy to see that $\eta + \bar{\eta}$ is a proper cocycle, as by the orthogonality of the Hilbert space decomposition for each $\beta \in \Irr_{\QG}$ we have \[((\eta+\bar{\eta})^{\beta})^* (\eta+\bar{\eta})^{\beta} = (\eta^{\beta})^* \eta^{\beta}+ (\bar{\eta}^{\beta})^* \bar{\eta}^{\beta} \geq (\eta^{\beta})^* \eta^{\beta}.\]
Thus without loss of generality we can assume that $\eta:\PolQG \to \Hil$ is a proper $\tau_{\frac{i}{2}}$-real cocycle. Let $L$ be the generating functional associated to $\eta$ via Theorem \ref{cocycle->funct}. The defining formula \eqref{definingformula} (the first equality)  shows that for all $\beta \in \Irr (\QG), i,j=1, \ldots, n_{\beta}$ we have
\begin{align*} L\left(u_{ij}^{\beta} \right.&+ \left. \left(\frac{q_j(\beta)}{q_i(\beta)}\right)^{\frac{1}{2}} u_{ij}^{\beta}\right)
=  - \sum_{k=1}^{n_{\beta}} \la \eta(R(u_{ik}^\beta)^*), \eta(\tau_{\frac{i}{2}}(u_{kj})) \ra = -
 \sum_{k=1}^{n_{\beta}} \la \eta(\tau_{-\frac{i}{2}} (u_{ki})), \eta(\tau_{\frac{i}{2}}(u_{kj})) \ra
\\&= - \sum_{k=1}^{n_{\beta}} \left(\frac{q_k(\beta)}{q_i(\beta)} \right)^{\frac{1}{2}} \left(\frac{q_j(\beta)}{q_k(\beta)} \right)^{\frac{1}{2}}
 \la \eta(u_{ki}), \eta(u_{kj}) \ra  =-  \left(\frac{q_j(\beta)}{q_i(\beta)}\right)^{\frac{1}{2}} \sum_{k=1}^{n_{\beta}}  \la \eta(u_{ki}), \eta(u_{kj}) \ra,
\end{align*}
which can be rephrased as a matrix equality
\begin{equation}\label{Qbeta} L^{\beta} + (Q^{\beta})^{-\frac{1}{2}} L^{\beta} (Q^{\beta})^{\frac{1}{2}} = - (Q^{\beta})^{-\frac{1}{2}} (\eta^{\beta})^* \eta^{\beta} (Q^{\beta})^{\frac{1}{2}}.\end{equation}
In the next step we employ the averaging procedure introduced in the proof of Proposition 7.17 in \cite{DawFimSkaWhi}. Let $\mean:L^{\infty}(\br) \to \bc$ be any invariant mean (in other words, a Banach limit on $\br$). Define a new functional $\wt{L}: \PolQG \to \bc$ by the formula
\[ \wt{L}(a) = \mean \left(t \mapsto L\circ \tau_t(a)\right), \;\;\; a \in \PolQG.\]
Positivity and invariance of $\mean$, and the facts that each $\tau_t$ preserves the counit and commutes with the unitary antipode imply that  $\wt{L}$ is a generating functional which is both $R$ invariant and $\tau_t$-invariant for any $t \in \br$, and hence also $S$-invariant. It remains to see what the averaging procedure looks like in the matrix level. Fix $\beta \in \Irr_{\QG}$ and note that for each $i,j=1,\ldots,n_{\beta}$ we have $\tau_t(u_{ij}^{\beta}) = \left( \frac{q_i(\beta)}{q_j(\beta)} \right)^{it} u_{ij}^{\beta}$, so that
\[ \wt{L} (u_{ij}^{\beta}) = L(u_{ij}^{\beta}) \mean \left(t \mapsto  \left( \frac{q_i(\beta)}{q_j(\beta)} \right)^{it}\right).\]
But the action of any invariant mean on continuous periodic functions is known to yield the average over the period, so we get
\[ \wt{L} (u_{ij}^{\beta}) =\begin{cases}  L(u_{ij}^{\beta}) & \textup{ if }   q_i(\beta) = q_j(\beta) \\ 0 & \tu{ otherwise}\end{cases}.\]
The last formula can be phrased on the matrix level: let $P_1, \ldots, P_k$ denote all the spectral projections of $Q^{\beta}$, with the corresponding eigenvalues $q_1, \ldots , q_k$ (of course $k$, the individual projections and the respective eigenvalues depend on $\beta$, we do not reflect it in the notation to avoid the clutter). Then the last displayed formula can be written as
\[ \wt{L}^{\beta} = \sum_{m=1}^k P_m L^{\beta} P_m.\]
It remains to  multiply both sides of \eqref{Qbeta} by individual $P_m$ and sum the  equalities:
\[ \sum_{m=1}^k P_m  \left(L^{\beta} + (Q^{\beta})^{-\frac{1}{2}} L^{\beta} (Q^{\beta})^{\frac{1}{2}} \right) P_m = - \sum_{m=1}^k P_m \left((Q^{\beta})^{-\frac{1}{2}} (\eta^{\beta})^* \eta^{\beta} (Q^{\beta})^{\frac{1}{2}} \right) P_m\]
and as $P_m (Q^{\beta})^t = q_m^t P_m $ for any $t \in \br$ we obtain
\[ \wt{L}^{\beta} + \sum_{m=1}^k  P_m q_m^{-\frac{1}{2}} L^{\beta} q_m^{\frac{1}{2}} P_m =  - \sum_{m=1}^k P_m \left(q_m^{-\frac{1}{2}} (\eta^{\beta})^* \eta^{\beta} q_m^{\frac{1}{2}} \right) P_m\]
and further
\[ 2 \wt{L}^{\beta} = - \sum_{m=1}^k P_m (\eta^{\beta})^* \eta^{\beta}  P_m.\]
Finally let $M>0$ and choose a finite set $F \subset \Irr_\QG$ such that for any $\beta \in \Irr_{\QG} \setminus F$ we have $(\eta^{\beta})^* \eta^{\beta} \geq 2M I_{\bc^{n_{\beta}}}$ (which we can do as $\eta$ is proper). Multiplying both sides of the last inequality with $P_m$ on both sides and summing shows that also
$\sum_{m=1}^k P_m (\eta^{\beta})^* \eta^{\beta}  P_m \geq 2M \sum_{k=1}^m P_m  = 2M I_{\bc^{n_{\beta}}}$,
which together with the last displayed formula shows that $\wt{L}$ is an $S$-invariant proper generating functional. Thus Theorem 7.18 of \cite{DawFimSkaWhi} shows that $\QGam$ has the Haagerup property.
\end{proof}

\section*{Acknowledgements}

%UF was supported by the ANR Project OSQPI (ANR-11-BS01-0008).

UF wants to thank the Alfried Krupp Wissenschaftskolleg in Greifswald, where he spent the summer term 2014 as research fellow, and where part of this research of carried out. AS is grateful to CNRS and the D\'epartement de math\'ematiques de Besan\c{c}on for the hospitality in autumn 2014. We all thank Michael Sch\"urmann for several useful discussions. Finally we are very grateful to the anonymous referee for a very careful reading of our manuscript and several useful remarks.

\end{document}